\declaretheoremstyle[bodyfont=\normalfont,spaceabove=\medskipamount,
    spacebelow=\medskipamount]{definition}
\theoremstyle{definition}
\newtheorem{theorem}{Theorem}[section]
\newtheorem{lemma}[theorem]{Lemma}
\newtheorem{corollary}[theorem]{Corollary}
\newtheorem{proposition}[theorem]{Proposition}
\newtheorem{definition}[theorem]{Definition}
\newtheorem{remark}[theorem]{Remark}
\newtheorem{example}[theorem]{Example}
\newcommand{\defeq}{\vcentcolon=}
\title{\large \textbf{THISTLETHWAITE THEOREMS FOR KNOTOIDS AND LINKOIDS}}
\author{\normalsize SERGEI CHMUTOV,  QINGYING DENG, JOANNA A.~ELLIS-MONAGHAN,\linebreak SERGEI LANDO, WOUT MOLTMAKER}
\date{}
\begin{document}

\maketitle

\begin{abstract}
The classical Thistlethwaite theorem for links can be phrased as asserting that the Kauffman bracket of a link can be obtained from an evaluation of the Bollob\'{a}s-Riordan polynomial of a ribbon graph associated to one of the link's Kauffman states. In this paper, we extend this result to knotoids, which are a generalization of knots that naturally arises in the study of protein topology. Specifically 
we extend the Thistlethwaite theorem to the twisted arrow polynomial of knotoids, which is an invariant of knotoids on compact, not necessarily orientable, surfaces. To this end, we define twisted knotoids, marked ribbon graphs, and their arrow- and Bollob\'{a}s-Riordan polynomials. We also 
extend the Thistlethwaite theorem to the loop arrow polynomial of knotoids in the plane, and to spherical linkoids.
\end{abstract}

% \tableofcontents

% \newpage

% [Moved todo to final page]

\section{Introduction}

The classical Thistlethwaite paper \cite{thistlethwaite1987} relates knot theory to combinatorics. More precisely, Thistlethwaite's theorem claims that up to a sign and a power of $t$ factor, the Jones polynomial  $V_L(t)$ of an alternating link $L$ is equal to the Tutte polynomial $T_{G_L}(-t,-t^{-1})$
of the plane graph $G_L$ obtained from a checkerboard coloring of the regions of a link diagram of $L$. Its generalization to arbitrary (not necessarily alternating) links leads to the notion of signed plane graph with signs $\pm$ assigned to the edges of the graph. This requires a signed extension of the Tutte polynomial, which was done by L.Kauffman in \cite{kauffman1989}.
I.Pak suggested to use the Bollob\'as-Riordan polynomial of ribbon graphs 
(as opposed to plane graphs) as a generalization of the Tutte polynomial. This idea was realized first in \cite{chmutov-pak2007} for virtual links with checkerboard colorable diagrams. The
Bollob\'as-Riordan polynomial was used for classical links in \cite{DFKLS} where 
the concepts of {\it Turaev surface}, {\it quasi-trees} etc.~were introduced. To arbitrary virtual links the Thistlethwaite theorem was generalized in \cite{chmutov-voltz2008} using the Seifert state as an initial data to construct the corresponding ribbon graph. An attempt to unify all these approaches led to the general virtual Thistlethwaite theorem \cite{chmutov-2009}, where an arbitrary Kauffman state may be used to construct a ribbon graph. Here a `Kauffman state' is a state in the Kauffman state sum for the bracket polynomial \cite{kauffman1987}. Independence of  the result with respect to the choice of the
initial state leads to the concept of {\it partial duality} of ribbon graphs and the invariance of the Bollob\'as-Riordan polynomial under it. The further generalization of this theorem for the {\it arrow polynomial} instead of the Jones polynomial was done in \cite{bradford2012arrow} and then in \cite{paugh-wu-zhang2021} for twisted links.

In this paper, we extend all these results to knotoids. Knotoids are a natural generalization of knots and links originally defined and studied by Turaev \cite{turaev2012}, in which one allows for knot diagrams with open ends. Unlike for braids and tangles, these endpoints may lie anywhere, including interior regions of the diagram. To ensure these open-ended components are not all trivial, we define consider knotoid diagram up to Reidemeister moves \emph{away from} the endpoints, and in particular it is not allowed to pull an endpoint over or under an adjacent crossing.

After Turaev's initial work \cite{turaev2012}, knotoids were studied further in works such as \cite{gugumcu2017,barbensi2018double,moltmaker2023}, where various strong knotoid invariants were constructed. Knotoids and their invariants have also found applications in modelling the topology of proteins \cite{dorier2018knoto,goundaroulis2020knotoids,panagiotou2020knot}. This work on invariants culminated in the first attempt at tabulating knotoids for low crossing number \cite{goundaroulis2019systematic}, for which the authors used the arrow- and loop arrow polynomials, among other invariants. In this paper we consider these arrow polynomials, giving Thistlethwaite theorems for them in the knotoid setting. This requires the introduction of decorated ribbon graphs which we call {\it marked ribbon graphs}, as well as polynomials associated with these objects. We further generalize our results to the setting of \textit{twisted knotoids}, which we introduce here and which model knotoids on closed (possibly non-orientable) surfaces, thereby generalizing virtual knotoids introduced in \cite{bourgoin2008}. These twisted knotoids are the knotoidal analogue of twisted links, which are links in the orientable thickenings of closed surfaces \cite{bourgoin2008}. To complete this analogy, we show that twisted knotoids similarly correspond to so-called \emph{$H$-curves} in orientable thickenings.

The paper is organized as follows. In Section \ref{sec:prelims} we introduce knotoids and twisted knotoids, and show how they correspond to $H$-curves. In Section \ref{sec:arrow} we recall the arrow- and loop arrow polynomials, and define an arrow polynomial for twisted knotoids. In Section \ref{sec:BR} we define marked ribbon graphs and their Bollob\'{a}s-Riordan polynomials, and give some basic results for these. Putting these together, in Section \ref{sec:Thistlethwaite} we show the Thistlethwaite theorems obtaining the various arrow polynomials as evaluations of Bollob\'{a}s-Riordan polynomials, and give some examples. Finally, in Section \ref{sec:linkoids} we briefly discuss the extension of our work to linkoids, which are multi-component knotoids.

% knotoids are...
% twisted knotoids
% marked ribbon graphs (This requires the introduction of decorated ribbon graphs which we call {\it marked ribbon graphs}, as well as various polynomials associated with these objects.)

% layout of the paper

% \newpage % temporary

\section{Preliminaries}\label{sec:prelims}

In this section we introduce twisted knotoids, the topological objects of interest to this paper. We also give some basic results for these objects.

\subsection{Knotoids}

Throughout this paper we write $I$ for $[0,1]$.

\begin{definition}\label{def:knotoid}
\cite{turaev2012} Let $\Sigma$ be a surface. A \emph{knotoid diagram} on $\Sigma$ is an immersion $I\to \Sigma$ all whose singularities are transversal double points endowed with over/under-crossing information. We refer to the image of $0$ as the \emph{tail} of a knotoid, and to the image of $1$ as its \emph{head}. Knotoids are always oriented from tail to head. We say two knotoid diagrams are \textit{equivalent} if they can be related by a sequence of ambient isotopies, Reidemeister moves $R1$, $R2$, and $R3$, and self-homeomorphisms of $\Sigma$ that preserve the diagram's orientation. A \emph{knotoid} on $\Sigma$ is an equivalence class of knotoid diagrams on $\Sigma$.
\end{definition}

% \csl{In Turaev's paper~\cite{turaev2012}, the equivalence of knotoid diagrams is
% defined differently. Namely, he allows for orientation preserving homeomorphisms
% of the surface~$\Sigma$ (he considers knotoids only in orientable surfaces)
% as opposed to ambient isotopies in the present definition. For surfaces of
% positive genus, these two definitions are different. I think, we must
% follow Turaev's original approach. It does not affect the presentation seriously,
% but we must check that all the constructions are consistent with this understanding.} \cw{You're right. We should follow \cite{bourgoin2008} with regards to this point both for diagrams and H-curves. I'll edit this section accordingly.}

Note that none of the Reidermeister moves involve endpoints.  Thus, there are no moves that create or destroy a crossing by moving an endpoint over or under an arc. Such moves are also called \emph{forbidden moves}.

\begin{definition}
A \emph{spherical knotoid} is a knotoid on $\Sigma=S^2$, and a \emph{planar knotoid} is a knotoid on $\Sigma=\mathbb{R}^2$.
\end{definition}

% \csl{Do we have the notion of trivial knotoid in the planar or spherical case only,
% or it is well-defined for arbitrary surface~$\Sigma$? In both cases the definition
% should be given.}
% \cw{It is now given in general below, as the unique knotoid with a 0-crossing diagram.}

\begin{example}\label{ex:1crossing}
Let $K$ be the knotoid diagram with one crossing, shown in Figure \ref{fig:1crossing}. As a planar knotoid $K$ is non-trivial, i.e.~inequivalent to the zero-crossing knotoid, as can be seen by applying for example the loop arrow polynomial defined in Section \ref{sec:arrow}. However, as a spherical knotoid $K$ is trivial. Indeed, the moves that trivialize $K$ are given by moving the exterior arc over the sphere to the other side of the diagram's endpoints and applying an $R1$ move. On the plane this is not possible as the endpoints prevent this arc from moving over them, but on the sphere we can move the arc outward, across the point at $\infty$, and to the other side without incurring any forbidden moves; see Figure \ref{fig:1crossing}.

\begin{figure}[ht]
    \centering
    \includegraphics[width=.6\linewidth]{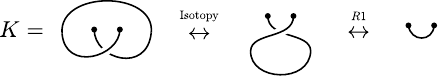}
    \caption{The one-crossing knotoid is trivial on $S^2$.}
    \label{fig:1crossing}
\end{figure}
\end{example}

\begin{remark}\label{rk:punctures}
Example \ref{ex:1crossing} exhibits precisely the difference between planar and spherical knotoids: the ability to move arcs past $\infty$. Thus spherical knotoids are equivalent to knotoids in the plane modulo \emph{spherical moves} which move an exterior arc to the other side of the diagram \cite{moltmaker2022}. Equivalently we can see planar knotoids as spherical knotoids containing a point marked `$\infty$' that arcs may not be passed across, treating planar knotoids as punctured spherical knotoids. For clarity we will use this last convention and draw planar knotoid diagrams as having a puncture.
\end{remark}

\subsection{Twisted knotoids}

Like the knotoids of Definition~\ref{def:knotoid}, \emph{twisted knotoids} are also equivalence classes of diagrams on surfaces, but here they must be on a compact surface (thus not the plane), and we introduce stabilization moves to the equivalence relation.

\begin{definition}
A \emph{twisted knotoid} is an equivalence class of knotoid diagrams on a compact (not necessarily orientable) surface considered up to self-homeomorphism conserving the diagram's orientation and \emph{stable equivalence}.  Stable equivalence is the equivalence relation generated by surface isotopy and the Reidemeister moves, as well as (de)stabilization moves. A \textit{destabilization move} consists of cutting the surface along a closed loop that doesn't intersect the diagram and capping off the resulting boundary circles with disks. (This is with two discs if the loop was orientation-preserving, or with one otherwise.) A \textit{stabilization move} is the reverse of a destabilization move.
% \csl{I'd suggest to
% omit the mentioning of~$\Sigma$ in the definition of a twisted knotoid. Indeed,
% because of stable equivalence, the underlying surface of a twisted knotoid is
% not well-defined.}
\end{definition}

Since twisted knotoids do not subsume planar knotoids, we consider planar knotoids separately in the remainder of this paper.

To represent twisted knotoids as planar diagrams, we can consider decorated knotoid diagrams. The decorations are either virtual crossings or bars. Virtual crossings represent places where the knotoid passes over itself along a handle, and bars represent places where an arc of the knotoid moves through a cross-cap.

\begin{definition}
A \emph{twisted knotoid diagram} is a knotoid diagram on $S^2$ whose crossings may be decorated either as a classical crossing or with a small circle as a \emph{virtual crossing}, familiar from virtual knot theory \cite{kauffman2021}, and whose arcs may be decorated with \emph{bars}, which are small segments drawn transversal to the arc.
\end{definition}

The equivalence relation on twisted knotoids carries over to an equivalence on twisted knotoid diagrams, generated by ambient isotopy and the set of \emph{twisted Reidemeister moves}; see Figure \ref{fig:moves}.

\begin{figure}[ht]
    \centering
    \includegraphics[width=.95\linewidth]{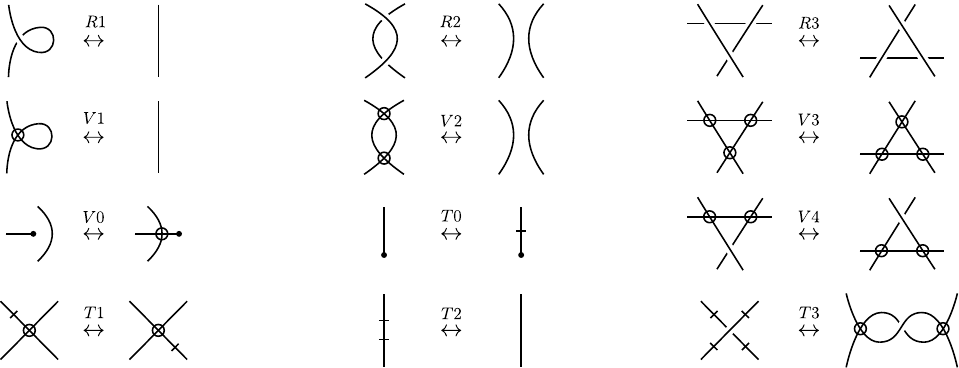}
     \caption{The twisted Reidemeister moves.}
    \label{fig:moves}
\end{figure}

To see that twisted knotoid diagrams modulo the equivalence generated by the moves in Figure \ref{fig:moves} indeed correspond with twisted knotoids, we adapt known results for twisted link diagrams from \cite{bourgoin2008} to knotoids. This involves  \emph{thickening} the diagrams and then capping them off with disks, for which we need the following definition.

\begin{definition}\label{def:thickening} 
% \cj{change this to 'taking an epsilon nbhd', in analogy with creating a ribbon graph from a graph drawn in a surface }
A \emph{thickening} of a twisted knotoid diagram is formed by taking a small neighbourhood around the diagram's arcs to obtain an undecorated knotoid in a surface with boundary. For a twisted knotoid diagram this is realized as a thickening of the arcs into bands, with classical crossings being thickened into a `four-way junction' of bands, virtual crossings into crossings of bands, and bars into twists in the bands. See Figure \ref{fig:thickening}. For the virtual crossing, we note that it is not important which band goes over and which under, as the resulting surfaces with boundary are homeomorphic. % Endpoints are thickened into ends of bands. 
% After thickening a diagram, the boundary of its bands is a collection of circles. 
To form a knotoid diagram on a compact surface $\Sigma$ from a twisted knotoid diagram, we sew disks into the boundary of its thickening. These disks are defined to be the \emph{regions} of the twisted knotoid diagram, and the attaching circles are called their \emph{boundaries}. See Figure \ref{fig:thickening} for an example.
\end{definition}

% \begin{figure}[ht]
%     \centering    
%     \includegraphics[width=.45\linewidth]{fig3 placeholder.pdf}
%     \caption{Placeholder for revised Fig~\ref{fig:thickening}}. \label{fig:thickeningreplace}
% \end{figure}

\begin{figure}[ht]
    \centering       \includegraphics[width=.65\linewidth]{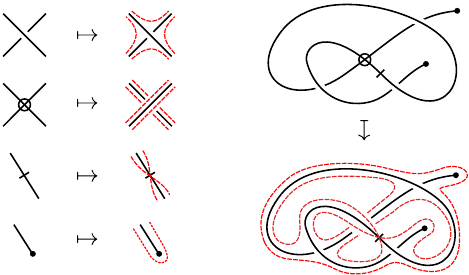}
    \caption{Rules for thickening a twisted knotoid diagram (left) and an example of the region boundaries of a twisted knotoid diagram with two regions (right).}
    \label{fig:thickening}
\end{figure}

It was shown in \cite{bourgoin2008} that, for twisted link diagrams, the moves in Figure \ref{fig:moves} are precisely the ones that, after thickening and capping off (and allowing for stabilization), generate equivalence of knot diagrams on compact surfaces $\Sigma$. To see that this result implies the analogous statement for knotoids, we note that an isotopy of knotoids on $\Sigma$ can be split into isotopies away from the endpoints and isotopies moving the endpoints. Isotopies away from the endpoints are known to be generated by the moves from Figure \ref{fig:moves} (excluding $V0$ and $T0$) due to the result for links. Isotopies near an endpoint may only move the endpoint within the region that contains it in $\Sigma$, and therefore correspond with a sequence of diagram isotopies and $V0$- and $T0$-moves.

It is further known that twisted link diagrams modulo twisted Reidemeister moves are equivalent to links in orientable thickenings $\Sigma \widetilde{\times} I$ of compact surfaces, modulo orientation-preserving homeomorphism and stable isotopy \cite{bourgoin2008}. Here the \textit{orientable thickening} $\Sigma \widetilde{\times} I$ of a compact surface $\Sigma$ is the orientation line bundle of $\Sigma$ with fiber $I$. Note that $\Sigma \widetilde{\times} I$ is always an orientable $3$-manifold with boundary, and that $\Sigma \widetilde{\times} I=\Sigma\times I$ if and only if $\Sigma$ is orientable. To obtain the analogous statement for twisted knotoids, we introduce `$H$-curves' in $\Sigma \widetilde{\times} I$:

\begin{definition}
Let $H$ denote the topological space given by a graph shaped like the letter `H', with its trivalent points labelled $h$ and $t$ (for `head' and `tail' respectively). We let $v_h,w_h$ denote the univalent vertices connected to $h$, and let $v_t,w_t$ denote those connected to $t$. Finally we let $e_h$ denote the path in $H$ between $v_h$ and $w_h$, and let $e_t$ denote that between $v_t$ and $w_t$.

An \emph{$H$-curve} is an embedding $\mathcal{H}:H\to \Sigma \widetilde{\times} I$ such that one of $\mathcal{H}(v_h)$ and $\mathcal{H}(w_h)$ lies on the 0-section of $\Sigma \widetilde{\times} I$ while the other lies on the 1-section, and similarly for $\mathcal{H}(v_t)$ and $\mathcal{H}(w_t)$. An $H$-curve is said to be \emph{simple} if $\mathcal{H}(e_h\cup e_t)$ is ambient isotopic to $\{p\times I\}\cup \{q\times I\}$ for some $p,q\in\Sigma$. The images $\mathcal{H}(e_h)$ and $\mathcal{H}(e_t)$ are also referred to as \textit{rails}.
\end{definition}

Like twisted knotoids, $H$-curves are considered up to orientation-preserving homeomorphisms of $\Sigma \widetilde{\times} I$ and \textit{stable equivalence}:

\begin{definition}
\cite{bourgoin2008} \textit{Stable equivalence} of $H$-curves in $\Sigma \widetilde{\times} I$ is the equivalence relation generated by ambient isotopies of $\Sigma \widetilde{\times} I$ that remain in the class of $H$-curves, and (de)stabilization moves. A \textit{destabilization move} on $\Sigma \widetilde{\times} I$ consists of cutting $\Sigma \widetilde{\times} I$ along the fiber of a closed loop in the zero-section that is disjoint from the $H$-curve, and capping off the resulting annular boundaries with thickened disks. A \textit{stabilization move} is the reverse of a destabilization move. For an example of a stabilization move see Figure \ref{fig:stabilization}.
\end{definition}

% which in this case means the equivalence relation generated by ambient isotopies of $\Sigma\times I$ that remain in the class of $H$-curves, and the addition/removal of empty thickened handles or cross-caps (i.e.~the thickened analogue of stabilization moves on $\Sigma$; also see \cite{bourgoin2008}). \cw{[There are some details I'm skipping here, will come back to this to maybe expand it with some technicalities.]}

\begin{figure}[ht]
    \centering
    \includegraphics[clip, trim=0.2cm 8cm 21cm 10cm, width=0.8 \linewidth]{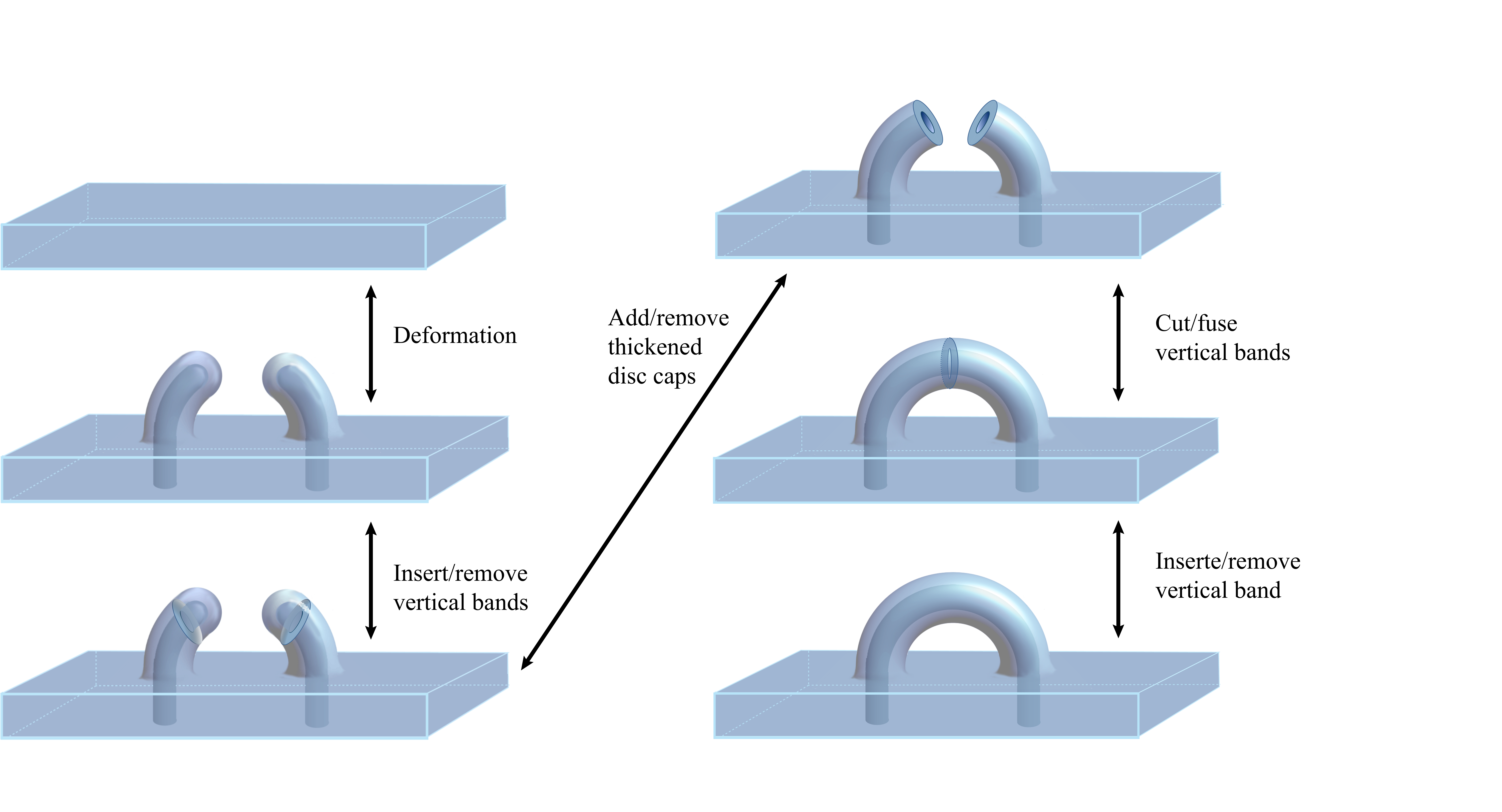}     %trim=left, bottom, right, top
     \caption{A stabilization move on a thickened surface.}
    \label{fig:stabilization}
\end{figure}

\begin{proposition}\label{prop:realization}
There is a bijection between twisted knotoids and simple $H$-curves in thickened compact surfaces modulo stable equivalence.  
% \cj{stable equivalence was defined for knotoid diagrams on compact surfaces.  It hasn't been defined for simple $H$ curves.  Do we mean just 'stabilization moves' on $\Sigma$ here?  Or?} \cw{Stable equivalence is defined directly above, meaning isotopy plus stabilization moves.}
\end{proposition}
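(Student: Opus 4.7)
The plan is to reduce Proposition \ref{prop:realization} to the analogous correspondence for twisted links proved in \cite{bourgoin2008}, using the two rails of the $H$-curve to encode the extra data of the tail and head endpoints of the knotoid.

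First I would construct a forward map $\Phi$ from twisted knotoid diagrams to simple $H$-curves. Given a twisted knotoid diagram $D$, apply the thickening-and-capping construction of Definition \ref{def:thickening} to obtain an (undecorated) knotoid diagram $K$ on a compact surface $\Sigma$ carrying distinguished points $p$ (tail) and $q$ (head). Embed $K\setminus\{p,q\}$ into the zero-section of $\Sigma\widetilde{\times} I$ as the image of the crossbar edge of $H$ joining $h$ and $t$, and at $p$ and $q$ attach straight fibers $\{p\}\times I$ and $\{q\}\times I$ as the rails $\mathcal{H}(e_t)$ and $\mathcal{H}(e_h)$. (These fibers are well-defined because the orientation line bundle is locally trivializable at any point.) The result is by construction a simple $H$-curve $\Phi(D)$.

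Next I would check that $\Phi$ descends to the equivalence relations. The twisted Reidemeister moves on $D$ that do not involve an endpoint are precisely the twisted link moves, so by \cite{bourgoin2008} they correspond to ambient isotopies of $\Sigma\widetilde{\times} I$ supported away from the rails. The remaining moves $V0$ and $T0$ manipulate neighborhoods of the endpoints only, and under $\Phi$ they correspond to ambient isotopies that slide a rail along a path in $\Sigma$ lying in a single region of the diagram, hence to honest ambient isotopies of $\Sigma\widetilde{\times} I$ through $H$-curves. Finally, the commutativity of diagrammatic (de)stabilization with the thickening-and-capping construction shows that diagrammatic (de)stabilizations correspond to (de)stabilizations of $\Sigma\widetilde{\times} I$ along a loop disjoint from the endpoints, i.e.\ disjoint from the rails, which is exactly the allowed form.

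For the inverse, given a simple $H$-curve $\mathcal{H}$, simplicity supplies an ambient isotopy of $\Sigma\widetilde{\times} I$ carrying the rails $\mathcal{H}(e_h)\cup\mathcal{H}(e_t)$ to $\{p\}\times I\cup\{q\}\times I$ for some $p,q\in\Sigma$. After this isotopy, choose a generic projection to $\Sigma$ that restricts to a local embedding of the crossbar $\mathcal{H}(e_c)$ away from finitely many transverse double points; recording over/under data from the $I$-coordinate and combining with the endpoints $p,q$ yields a knotoid diagram on $\Sigma$, and then the inverse of the thickening-and-capping correspondence produces a twisted knotoid diagram $\Psi(\mathcal{H})$. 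One then verifies $\Psi\circ\Phi=\id$ and $\Phi\circ\Psi=\id$ on equivalence classes, again by restricting to the complement of fibered neighborhoods of the two rails and invoking the twisted link correspondence there.

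The main obstacle I expect is verifying that the extra flexibility introduced on the $H$-curve side by rail isotopies matches precisely the extra flexibility on the diagram side from the endpoint-specific moves $V0$ and $T0$, and no more. Concretely, one must show that any ambient isotopy of $\Sigma\widetilde{\times} I$ through $H$-curves can be factored into a portion that fixes the rails setwise (reducing to the twisted link case) and a portion that slides each rail along a path in $\Sigma$ confined to a single diagrammatic region; this confinement is what prevents forbidden moves from being produced and is the essential knotoid-specific point of the argument.
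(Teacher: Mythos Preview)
Your approach is essentially the same as the paper's: reduce everything away from the endpoints to Bourgoin's twisted link correspondence, and handle the endpoint-specific moves $V0$ and $T0$ separately by interpreting them as rail slides. Two small points are worth noting. First, you cannot literally embed $K\setminus\{p,q\}$ in the zero-section, since the diagram has crossings; the paper instead places the arc in the $(1/2)$-section and perturbs according to the crossing data, which is surely what you intend. Second, the ``main obstacle'' you flag---factoring an arbitrary isotopy of $H$-curves into a rail-fixing part and a rail-sliding part confined to a single region---is exactly what the paper addresses by introducing \emph{standard} $H$-curves (those whose rails are already fibers $\{p\}\times I$) and arguing that any equivalence between standard $H$-curves can be homotoped, through ambient isotopies, to one that remains standard throughout; once this is granted, isotopies near a rail project to isotopies of the endpoint within its region on $\Sigma$, which diagrammatically are sequences of $V0$ and $T0$ moves. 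The paper also invokes \cite{hudson1964combinatorial} to justify decomposing a stable equivalence into pieces near and away from the rails.
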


\begin{proof}
Let us call an $H$-curve in $\Sigma \widetilde{\times} I$ \emph{standard} if the images of $e_h$ and $e_t$ are of the form $\{p\}\times I$ for some $p\in \Sigma$. See Figure \ref{fig:H-curves}. By definition, every simple $H$-curve is equivalent to a standard $H$-curve. It is also easy to see that any equivalence of standard $H$-curves can be modified such that the $H$-curve at every step of the deformation remains standard. More formally, for any ambient isotopy $f_0: (\Sigma \widetilde{\times} I)\times I\to \Sigma \widetilde{\times} I$ interpolating between two standard $H$-curves, there exists a smooth map $F: (\Sigma \widetilde{\times} I)\times I\times I\to (\Sigma \widetilde{\times} I)\times I$ such that $F(-,-,-,t)$ is an ambient isotopy between the same $H$-curves for every $t\in I$, $F(-,-,-,0)=f_0$, and $F(-,-,s,1)$ is a standard $H$-curve for all $s\in I$. Hence we may restrict our attention to standard $H$-curves.

\begin{figure}[ht]
    \centering
    \includegraphics[width=.6\linewidth]{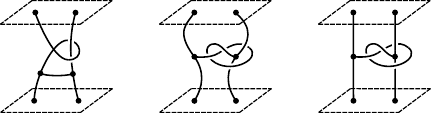}
     \caption{An $H$-curve that is neither simple nor standard (left), a simple non-standard $H$-curve (middle), and an equivalent standard $H$-curve (right).}
    \label{fig:H-curves}
\end{figure}

% modified to move entirely through the class of standard $H$-curves, so that we may restrict our attention to standard $H$-curves.

Let $\pi$ be the map sending a standard $H$-curve in $\Sigma \widetilde{\times} I$ to a knotoid in $\Sigma$ by projecting $\Sigma\times I$ onto the 0-section of $\Sigma \widetilde{\times} I$. This indeed yields a knotoid diagram, whose head and tail are respectively formed by the images of $e_h$ and $e_t$ in $\Sigma \widetilde{\times} I$, and whose crossing information is induced by the projection. Conversely, let $\iota$ be the map sending a knotoid with head $h$ and tail $t$ in $\Sigma$ to a standard $H$-curve in $\Sigma \widetilde{\times} I$ by drawing the knotoid in the $(1/2)$-section of $\Sigma \widetilde{\times} I$, perturbing it to have no singularities in accordance with the crossing information, and adding the lines $\{h\}\times I$ and $\{t\}\times I$ to obtain an $H$-curve.

% \csl{Rails are not defined at the moment?}

Then it is clear that $\pi$ and $\iota$ are mutually inverse, so that they constitute the desired bijection provided they are well-defined. So it suffices to show that $\pi$ and $\iota$ factor through the equivalence relations on $H$-curves and knotoids respectively.

For well-definedness of $\iota$, note that an ambient isotopy of a knotoid diagram $K$ translates directly to an ambient isotopy of $\iota(K)$, so it suffices to consider well-definedness of $\iota$ under the twisted Reidemeister moves. Note that all of these moves, except for $V0$ and $T0$, are already known to correspond to local stable equivalence of $\iota(K)$ from the theories of virtual and twisted links \cite{kauffman2021,bourgoin2008}. So it is left to show that two knotoids related by $V0$- or $T0$-moves have stably equivalent images under $\iota$. This is shown in Figure \ref{fig:V0_and_T0}.
% This is shown in Figures \ref{fig:V0} and \ref{fig:T0}, respectively.

\begin{figure}[ht]
    \centering
    \includegraphics[clip, trim=0cm 0cm 36cm 0cm, width=0.8 \linewidth]{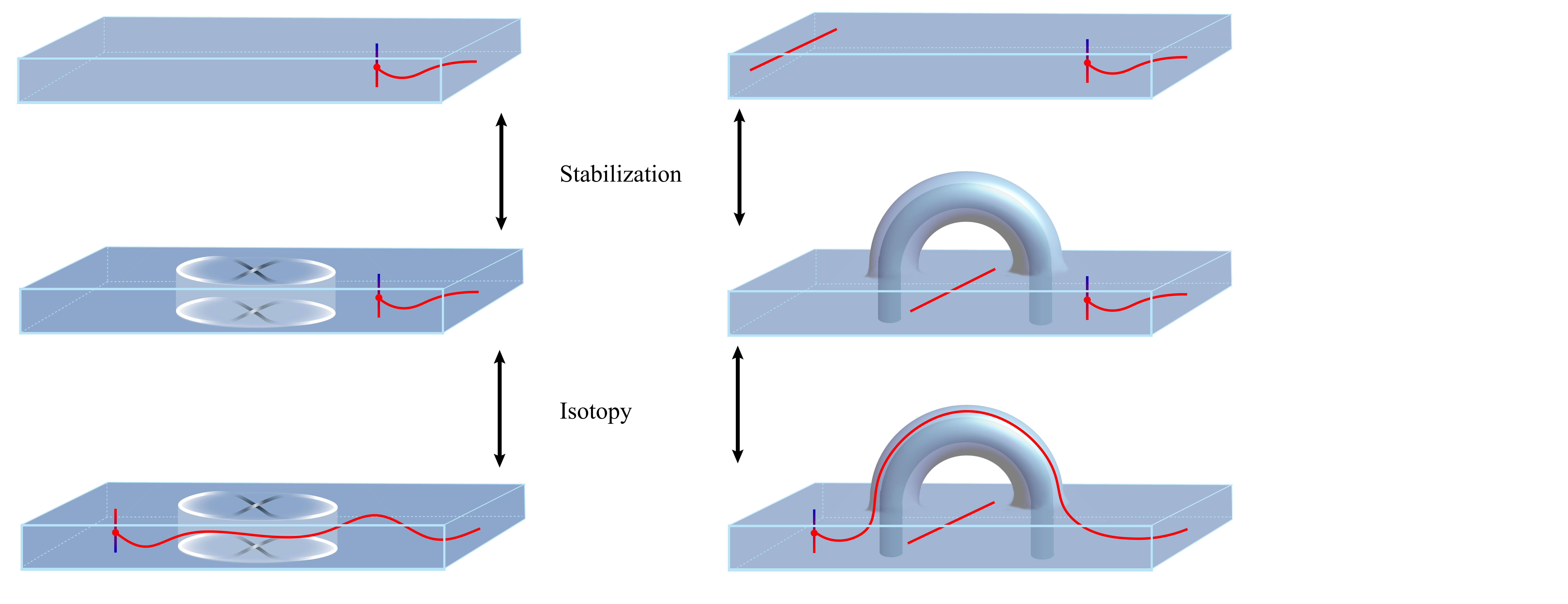}     %trim=left, bottom, right, top
    \caption{Image of $T0$ (left-hand column) and $V0$ (right-hand column) moves under $\iota$. In both cases the image of the move constitutes a stable equivalence.}
    \label{fig:V0_and_T0}
\end{figure}

% \begin{figure}[ht]
%     \centering
%     \includegraphics[width=\linewidth]{placeholder V0_placeholder.png}
%     \caption{Well-definedness of $\iota$ under $V0$.}
%     \label{fig:V0}
% \end{figure}

% \begin{figure}[ht]
%     \centering
%     \includegraphics[width=\linewidth]{placeholder T0_placeholder.png}
%     \caption{Well-definedness of $\iota$ under $T0$. Cross-cap punctured for legibility.}
%     \label{fig:T0}
% \end{figure}

For well-definedness of $\pi$, note that stable equivalences of an $H$-curve $\mathcal{H}$ can be decomposed into a sequence of stable equivalences in neighbourhoods of $\mathcal{H}(e_h)$ or $\mathcal{H}(e_t)$, and stable equivalences away from them \cite{hudson1964combinatorial}. Well-definedness of $\pi$ under stable equivalences away from the rails is known from the theories of virtual and twisted links, so it suffices to consider stable equivalences near the rails. Note that stabilization moves have no effect on the diagram of $\pi(\mathcal{H})$, so we only need to consider isotopies near endpoint rails. By the assumption of standard-ness, such isotopies correspond directly with isotopies on $\Sigma$ near the endpoints of $\pi(\mathcal{H})$. In a diagram of $\pi(\mathcal{H})$, these isotopies on $\Sigma$ look like a sequence of ambient isotopies, $V0$ moves, and $T0$ moves.
\end{proof}

\begin{remark}
Note that Proposition \ref{prop:realization} subsumes the geometric realization of spherical knotoids by simple theta-curves in $S^3$ from \cite{turaev2012} once we restrict our attention to the case of the thickened sphere and disallow stabilization moves. Indeed, in this case we are just considering spherical knotoids as simple $H$-curves in $S^2\times I$, and such $H$-curves are in one-to-one correspondence with simple theta-curves with the correspondence being induced by the map $S^2\times I\to S^3$ that collapses $S^2\times \{0\}$ and $S^2\times \{1\}$ to points.
\end{remark}

% \newpage 

\section{Arrow polynomials}\label{sec:arrow}

In this section we recall the arrow polynomial for spherical knotoids and the loop arrow polynomial for planar knotoids, and extend the former to twisted knotoids. To keep this discussion concise, we begin by discussing the generalities of arrow polynomial states, before defining the various arrow polynomials under consideration.

\subsection{State expansion and reductions}\label{subsec:states}

The basic premise of arrow polynomials is the oriented state expansion of crossings in an oriented diagram, depicted in Figure \ref{fig:expansion}. 
% This expands any diagram into a $\mathbb{Z}[A,B]$-linear combination of crossing-free diagrams, known as states. 
This expansion is analogous to the skein relation defining the Kauffman bracket polynomial \cite{kauffman1987}, but using the orientation 
% \csl{I'd suggest to replace the ambiguous term `orientation' with `arrows'} \cj{and then maybe use `consistent' and `inconsistent' instead of oriented and disoriented? }\cw{I prefer `orientation' over `arrows' since we already use arrows for something else, but I agree with consistent/inconsistent} 
to distinguish between smoothings that are consistent and inconsistent with the orientation. The disoriented smoothings are decorated with arrows at the points where the knotoid's orientation changes; see Figure \ref{fig:expansion}. Here the arrows are always placed to run counter-clockwise around where the crossing was. Note that this can be done consistently since all diagrams are on the plane or sphere, even though they may represent a knotoid on a non-orientable surface.

\begin{figure}[ht]
    \centering
    \includegraphics[width=.5\linewidth]{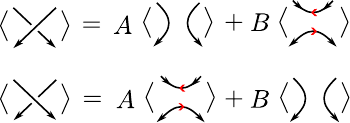}
    \caption{The oriented state expansion. The long black arrows indicate the original `tail-to-head' direction of the knotoid, while the short red arrows indicate point where the direction changes in the resulting states.}  
    % \cj{it might be good to put black arrows on the top arcs as well?} \cw{Will do. Also discuss orientability of plane.}
    \label{fig:expansion}
\end{figure}

\subsubsection{Spherical and virtual states}

For simplicity we begin by considering the expansion of spherical knotoids. Successive applications of the oriented state expansion reduce any spherical knotoid diagram to a $\mathbb{Z}[A,B]$-linear combination of crossing-free diagrams, known as \emph{states} of the knotoid. Each state is a disjoint union of closed loops and a single interval, known as \emph{state components}. These are some number of circular `\emph{loop components}' as well as a unique `\emph{arc component}', each of which is decorated with some number of arrows.
% Such a decorated crossing-free diagram is referred to as a \emph{state} of the knotoid. The connected components of a state are referred to as \emph{state components}, consisting of `loop components' and one `arc component'.

The states of a knotoid are further subject to reduction rules: two adjacent arrows pointing in the same direction may be cancelled. See Figure \ref{fig:reduction}. To produce the value of the arrow polynomials, states must be `fully reduced':

\begin{definition}
A state of an oriented state expansion is said to be \emph{fully reduced} if all its state components have the minimal number of arrows among diagrams that are equivalent modulo the reduction rules.
\end{definition}

\begin{figure}[ht]
    \centering
    \includegraphics[width=.17\linewidth]{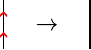}
    \caption{Reduction rule for arrows.}
    \label{fig:reduction}
\end{figure}

\begin{lemma}\label{lm:even_arrows}
The number of arrows on each state component of a state is even.
\end{lemma}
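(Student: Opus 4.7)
The plan is to orient a walk along each state component and to track how the \emph{original} orientation of $K$ (from tail to head) sits relative to this walk. I will show that the arrows on a component are exactly the points where this relative orientation flips, and then finish with a parity argument.

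First I would fix a diagram of $K$ with its tail-to-head orientation, together with a state $S$, and consider a component $C$ of $S$. Each such $C$ is either a closed loop or the unique arc component (whose endpoints are the tail and head of $K$). Reading Figure \ref{fig:expansion}, at an oriented smoothing the two original strand orientations extend consistently along the two resulting local arcs, while at a disoriented smoothing, on each of the two resulting local arcs the orientations inherited from the two original strand pieces meeting there disagree at the smoothing site, and the arrows are placed precisely at these points of orientation discontinuity. Consequently, the arrows lying on $C$ are in bijection with the points of $C$ at which the orientation inherited from the original strands of $K$ reverses with respect to a traversal of $C$.

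Next I would parametrize $C$ smoothly: as $\gamma:S^1\to\Sigma$ if $C$ is a loop, or as $\gamma:[0,1]\to\Sigma$ with $\gamma(0)$ the tail and $\gamma(1)$ the head if $C$ is the arc component. Away from the arrows, define $\sigma:C\setminus\{\text{arrows}\}\to\{\pm 1\}$ by $\sigma=+1$ where the inherited strand orientation agrees with $\dot\gamma$ and $\sigma=-1$ where it disagrees. By the previous paragraph, $\sigma$ is locally constant on its domain and changes sign at exactly the arrows on $C$.

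To conclude: if $C$ is a loop then $\gamma$ is periodic, so $\sigma$ must return to its initial value and therefore flips an even number of times. If $C$ is the arc component, then just after $\gamma(0)$ the walk proceeds away from the tail, which is also the direction of the original tail-to-head orientation, giving $\sigma=+1$; similarly, just before $\gamma(1)$ the walk proceeds toward the head, again matching the original orientation, so $\sigma=+1$. Hence $\sigma$ has equal starting and ending values and must flip an even number of times. In both cases the number of arrows on $C$ is even. The main obstacle is simply to articulate the orientation behavior at a disoriented smoothing carefully from Figure \ref{fig:expansion}; the rest is pure parity bookkeeping.
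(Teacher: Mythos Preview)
Your argument is correct. For the loop components it coincides with the paper's proof: both track the inherited strand orientation along a traversal and use periodicity to force an even number of flips. The difference is in how the arc component is handled. You treat it uniformly with the loop case, observing that near the tail the traversal direction agrees with the tail-to-head orientation of $K$, and the same holds near the head, so $\sigma$ starts and ends at $+1$ and flips evenly. The paper instead argues indirectly: since arrows are created in pairs by the disoriented smoothings, the total number of arrows on the whole state is even; having already shown each loop carries an even number, the single remaining arc must as well. Your direct argument is cleaner and self-contained for the arc, while the paper's counting trick makes explicit why the presence of exactly one arc component is essential---with several arcs (the linkoid case of Section~\ref{sec:linkoids}) the global parity only constrains the sum over all arcs, and indeed your endpoint argument would also break down there, since an arc component could then join two heads or two tails and the boundary values of $\sigma$ need not agree.
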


\begin{proof}
Any state consists of a tuple of loop components and a single arc component. Picking a point $x$ on an arbitrary loop component, consider the orientation of the state at $x$. Note that arrows on the state occur precisely at disoriented smoothings, where the orientation of the state is flipped. So running around the loop component from $x$ to itself, we must encounter an even number of arrows. Hence each loop component has an even number of arrows. Since the arrows are created in pairs with the smoothings, the total number of arrows on any state is even.  Thus, the single arc component must also have an even number of arrows. 
\end{proof}

We observe that the proof of Lemma \ref{lm:even_arrows} is completely elementary and therefore readily generalizes to twisted and planar knotoids. However,  the proof does rely on there being exactly one arc component and therefore does not generalize to linkoids \cite{gabrovvsek2023}. Indeed Lemma \ref{lm:even_arrows} does not hold for linkoids, as we shall see in Section \ref{sec:linkoids}.

Clearly a state is fully reduced if and only if all the arrows on it are alternating.
% \csl{This is not clear at all? If a state contains a circle with two alternating 
% arrows separated by two bars, is it fully reduced? And if bars are not allowed,
% so that we consider currently only knotoids rather than twisted knotoids,
% then Fig.~\ref{fig:reduction} should not contain a picture with bars.}
% \cw{It's right that bars are not allowed here, and I agree that it is awkward to have them in Fig. 10. I will move the right isde of Figure 10 down and add the reduction rule T2 to it instead.}
In combination with Lemma \ref{lm:even_arrows}, we conclude each state component in a fully reduced state is a loop with $2i$ alternating arrows, or an arc with $2i$ arrows. The arc components come in two distinct varieties, namely those whose first arrow (seen from the tail) points along the orientation at the arc's tail, and those whose first arrow does not. We label these reduced state components by variables $K_i$, $\Lambda_i$, and $\Lambda'_i$ respectively.

% \csl{Since $K$ usually denotes a knotoid, I'd suggest to replace notation
% $K_i$ for loop variables with $L_i$.}

\begin{remark}
For virtual knotoids, by which we mean twisted knotoid diagrams with no bars considered up to the $R$- and $V$-moves only (recall Figure \ref{fig:moves}), the story is much the same. In the presence of virtual crossings we call a diagram crossing-free if it contains no classical crossings. Successive applications of the oriented state expansion reduce any virtual knotoid to a crossing-free diagram, consisting of a set of state components with only virtual crossings. Allowing arrows to pass virtual crossings, any diagram with only virtual crossings is trivial, i.e.~equivalent to a diagram without virtual crossings. From this we conclude that also in the virtual case, each reduced state is equivalent to a disjoint union of copies of $K_i$, $\Lambda_i$, and $\Lambda'_i$.
\end{remark}

\subsubsection{Twisted states}

In the case of twisted knotoids we must account for the presence of bars. First we allow $T0$ and $T2$ moves on state diagrams. An additional reduction rule for states decorated both with arrows and with bars must be introduced \cite{deng2022}; see the right side of Figure \ref{fig:twisted_reduction}. A state containing both arrows and bars is said to be fully reduced if it has a minimal number of bars and of arrows.

\begin{figure}[ht]
    \centering
    \includegraphics[width=.5\linewidth]{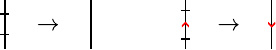}
    \caption{Reduction rules for bars.}
    \label{fig:twisted_reduction}
\end{figure}

\begin{lemma}\label{lm:bar_passing}
The reduction rules imply that we may pass a bar over an arrow, changing the arrow's direction in the process. See Figure \ref{fig:bar_passing}.
\end{lemma}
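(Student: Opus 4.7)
The plan is to derive the bar-over-arrow move as a short formal consequence of the two reduction rules shown in Figure~\ref{fig:twisted_reduction}, without appealing to any geometric reasoning about the underlying state. The strategy is the standard ``sandwich trick'': insert a canceling pair of bars so that one of the existing reductions becomes applicable in a way that flips an arrow.

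Concretely, I would start with a local fragment of a state component carrying the consecutive decorations bar, arrow (reading in the strand's orientation). First, I would use the bar-cancellation rule from Figure~\ref{fig:twisted_reduction} in reverse to insert a pair of adjacent bars immediately to the right of the arrow, producing the sequence bar, arrow, bar, bar. Next, I would apply the second rule of Figure~\ref{fig:twisted_reduction}, which reduces a bar-arrow-bar triple on a strand to a single arrow pointing in the opposite direction, to the leftmost three decorations. The result is the sequence (reversed arrow), bar, which is precisely the configuration the lemma asserts is equivalent to the original. The opposite direction of the equivalence is obtained by running the same argument in reverse, and the mirror case (bar on the right of the arrow) is handled by inserting the canceling pair on the left instead.

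The only real obstacle is bookkeeping: one must make sure that the local orientation of the strand is tracked consistently across each step so that ``reversed arrow'' unambiguously refers to the arrow pointing opposite to its original direction, and so that the inserted bars genuinely sit on the same strand at the same location and not, for instance, separated by a virtual crossing. Since everything happens in a small disc neighbourhood of the original bar-arrow pair and only the two rules of Figure~\ref{fig:twisted_reduction} are invoked, no further case analysis or additional moves are needed, and the proof occupies a single diagrammatic line.
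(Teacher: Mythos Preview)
Your argument is correct and matches the paper's approach. The paper's proof is the single sentence ``This follows by applying a $T2$ move,'' with the accompanying figure showing exactly the derivation you describe: insert a cancelling pair of bars via $T2$, then apply the bar--arrow--bar reduction rule from the right of Figure~\ref{fig:twisted_reduction} to flip the arrow and absorb one bar.
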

\begin{proof}
This follows by applying a $T2$ move. See Figure \ref{fig:bar_passing}.
\end{proof}

\begin{figure}[ht]
    \centering
    \includegraphics[width=.4\linewidth]{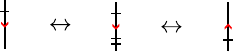}
    \caption{Moving a bar over an arrow.}
    \label{fig:bar_passing}
\end{figure}

% \begin{lemma}\label{lm:lambdas}
% In the presence of bars, $\Lambda_i$ and $\Lambda'_i$ are equivalent. \cw{[Wout: absorb into next lemma]}
% \end{lemma}

% \begin{proof}
% Beginning with $\Lambda_i$, apply a $T0$ move to the tail of the arc component. Then pass this bar over all the arrows on the component. The result is a copy of $\Lambda'_i$ with a bar at its head. Applying another $T0$ move then yields $\Lambda'_i$.
% \end{proof}

\begin{lemma}
Let $C$ be a reduced state component in a twisted knotoid state. Then if $C$ has arrows it is equivalent to one of $\{K_i,\Lambda_i\}_{i\in\mathbb{N}}$, or else $C$ is equivalent to a loop with a single bar on it, which we label by a variable $K_{1/2}$. Here $K_i,\Lambda_i$ denote the same reduced state components as for the classical and virtual cases.
\end{lemma}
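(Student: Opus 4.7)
The plan is to treat arcs and loops separately, since the presence of endpoints on an arc lets bars escape via $T0$ moves, while on a loop bars are trapped and can only disappear in pairs via $T2$. I will show that on an arc every bar can be removed and, moreover, the $\Lambda'_j$ configurations collapse onto $\Lambda_j$, while on a loop the number of bars is reduced modulo $2$ and a single surviving bar annihilates every remaining arrow.

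For the arc case, I will first use Lemma \ref{lm:bar_passing} to transport every bar to a neighbourhood of one endpoint, flipping the directions of any arrows the bar is passed over. Successive $T0$ moves then slide each bar off the endpoint, leaving an arc without bars. At this point the classical reduction argument for arrows on arcs shows $C$ is equivalent to $\Lambda_j$ or $\Lambda'_j$ for some $j$. To eliminate the $\Lambda'_j$ possibility I will introduce an auxiliary bar at the tail via a $T0$ move, transport it along the full length of the arc, flipping each of the $2j$ arrows by Lemma \ref{lm:bar_passing}, and then absorb it at the head via another $T0$ move. Since all arrows on $\Lambda'_j$ are reversed, the resulting pattern is precisely $\Lambda_j$.

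For the loop case I first observe that bars have nowhere to escape, so the only mechanism for their disappearance is pairwise $T2$ cancellation, and the parity of the number of bars is therefore an invariant. If the parity is even, bars can be completely eliminated by sliding them together using Lemma \ref{lm:bar_passing} and cancelling them in pairs, which brings us back to the classical arrow-only setting and yields $K_i$. The key step, and what I expect to be the main obstacle, is the case of exactly one surviving bar: I will argue that a single bar on a loop annihilates every arrow. Given any two neighbouring arrows on the loop, I use Lemma \ref{lm:bar_passing} to shuttle the bar between them, thereby flipping one so that the two arrows now point in the same direction and cancel via the ordinary arrow-reduction. Iterating this procedure empties the loop of arrows and leaves $K_{1/2}$.

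Finally, I will confirm minimality of the proposed normal forms, so that they are genuinely fully reduced rather than merely equivalent to a reduced form. For $K_i$ and $\Lambda_i$ this follows from the classical and virtual analyses already in place. For $K_{1/2}$ there is only one bar and no arrow, so none of the reduction rules, namely $T2$ cancellation, arrow cancellation, or Lemma \ref{lm:bar_passing}, can decrease the total count of decorations, confirming that $K_{1/2}$ is fully reduced.
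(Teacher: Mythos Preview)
Your proposal is correct and follows essentially the same approach as the paper: treat arcs and loops separately, push bars to an endpoint and remove them via $T0$ in the arc case (then use a single bar passed end-to-end to identify $\Lambda'_j$ with $\Lambda_j$), and in the loop case cancel bars in pairs via $T2$, with a surviving odd bar used to flip arrows into cancellable pairs until only $K_{1/2}$ remains. The only cosmetic difference is that the paper first reduces the odd-bar loop to alternating arrows before using the bar to create cancellable pairs, whereas you apply the bar-shuttling directly; both work.
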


\begin{proof}
Suppose $C$ is an arc component of a state. Then to put $C$ in fully reduced form, move all the bars on $C$ to its tail using Lemma \ref{lm:bar_passing}. Next apply reduction moves to cancel any non-alternating pairs of arrows, and apply $T0$ moves to remove all the bars. Then the result is either $\Lambda_i$ or $\Lambda'_i$ which are both clearly fully reduced. Finally we note that $\Lambda_i$ and $\Lambda'_i$ are equivalent in the twisted setting, so that $C$ is equivalent to $\Lambda_i$ without loss of generality. Indeed, to obtain $\Lambda'_i$ from $\Lambda_i$, apply a $T0$ move to the tail of the arc component. Then pass this bar over all the arrows on the component. The result is a copy of $\Lambda'_i$ with a bar at its head. Applying another $T0$ move then yields $\Lambda'_i$.

Next, if $C$ is a loop component suppose $C$ contains an even number of bars. Then $C$ can be reduced by moving all the bars on it to a neighbourhood of some point on $C$ using Lemma \ref{lm:bar_passing}, and cancelling them all using $T2$ moves. Afterwards we apply reduction moves cancelling non-alternating arrows to obtain a copy of $K_i$, which is clearly fully reduced.

Finally, if $C$ is a loop component with an odd number of bars, it can again be reduced by collecting all the bars around a single point, removing them in pairs using $T2$ moves, and finally removing any non-alternating arrows. The result is a loop with alternating arrows and a single bar. Moving the bar over one of its adjacent arrows flips the arrow, creating a pair of non-alternating arrows which we cancel. Doing so repeatedly cancels all arrows on the loop, leaving us with a copy of $K_{1/2}$ which is again clearly fully reduced.
\end{proof}

\subsubsection{Planar states}

Finally, we discuss the states coming from planar knotoids. Due to the puncture present in their diagrams (recall Remark \ref{rk:punctures}) these have additional structure. In another sense, however, states of planar knotoid diagrams have less structure. Namely these diagrams have only classical crossings which implies the states will have no components $K_i$:

\begin{lemma}\label{lm:classical_arrows}
Let $K$ be a classical knotoid diagram. Then all the loop components in a reduced state of $K$ are free of arrows.
\end{lemma}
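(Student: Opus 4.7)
The plan is to use the planarity of $K$ to construct a non-crossing chord structure on the arrows of each loop component, which then allows them to be cancelled via the reduction rule.

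First, I would note that every loop $L$ in a state $S$ of a classical planar knotoid $K$ bounds a disk $D$ in $S^2$ by the Jordan curve theorem, and hence the arrows on $L$ can be paired up via arcs of the state lying in $D$. Because all arcs of the state are embedded in the plane (without virtual crossings or bars, since $K$ is classical), and each arrow arises from a disoriented smoothing whose companion arc either lies on $L$ or connects across $D$ to another component of $S$, the resulting pairing of arrows on $L$ is a non-crossing chord diagram on $L$.

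Next, I would proceed by induction on the number of chords in this diagram. An innermost chord (one with no other chord endpoints between its two endpoints on $L$) corresponds to two adjacent arrows on $L$. By the counter-clockwise convention for arrow placement at each disoriented smoothing (Figure \ref{fig:expansion}), together with the fact that both arrows lie on the boundary of a common planar sub-disk of $D$, these two arrows must point in the same direction along $L$, and hence may be cancelled via the reduction rule (Figure \ref{fig:reduction}). Removing this chord and the corresponding pair of arrows preserves the non-crossing structure of the remaining chord diagram, so we may iterate until no chords remain, reducing the arrow count on $L$ to zero.

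The main obstacle is the verification that each innermost chord pair genuinely satisfies the same-direction condition. This requires a careful local analysis of the configurations of disoriented smoothings and their arrow-direction assignments, together with the constraint that both chord endpoints lie on the boundary of the same planar sub-disk. The argument relies essentially on the planarity of the diagram, as the absence of virtual crossings and bars precludes the nontrivial winding of arrow directions that gives rise to the irreducible loop components $K_i$ with $i>0$ seen in the twisted and virtual settings.
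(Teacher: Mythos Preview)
Your proposed pairing of arrows on $L$ is not well-defined, and this is a genuine gap rather than a missing detail. At a disoriented smoothing where $L$ participates, the companion arc need not lie in your chosen disk $D$: it can equally well lie on the \emph{outside} of $L$. Moreover, even when the companion arc lies in $D$, it is part of a separate closed component of the state (or of the arc component) that does not return to $L$; the state has no crossings, so nothing inside $D$ touches $L$ again. Thus following the companion arc does not produce a second arrow on $L$, and no chord diagram on $L$ arises from this construction. Your own acknowledgement that the ``same-direction'' verification is the main obstacle is symptomatic: without a well-defined pairing, there is no innermost chord to analyse.

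The paper takes a different route. It invokes Kauffman's argument that any \emph{reduced} loop still carrying arrows must have a nontrivial reduced state component in its interior and another in its exterior (with respect to the Jordan decomposition). Since only one of these two regions can contain the unique arc component, the other must contain another nontrivial loop, which in turn forces nontrivial components on both of \emph{its} sides, and so on. This infinite descent contradicts the finiteness of the crossing number. If you want to repair your approach, the useful observation is that the direction of an arrow on $L$ (clockwise versus counter-clockwise along $L$) records precisely which side of $L$ the companion arc lies on; after reduction the arrows alternate, so companions alternate between inside and outside, and this is what feeds into the descent argument rather than into a direct cancellation.
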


\begin{proof}
The argument from \cite[Sec.~8]{kauffman2012} for the analogous statement for classical links generalizes directly. Indeed: this argument shows that any reduced loop with arrows on it must have another nontrivial reduced state component in its interior region, and one in its exterior region. Here the interior/exterior regions are those regions of the plane associated to the loop by the Jordan curve theorem. Suppose we are given a nontrivial reduced loop component. Then only one of these regions can contain the arc component, meaning the other must contain infinitely many loops. This contradicts the assumption that $K$ has finitely many crossings.
\end{proof}

Clearly the proof of Lemma \ref{lm:classical_arrows} makes critical use of the assumption that our component is a loop, as it invokes the Jordan curve theorem. This assumption is necessary, meaning the arc component can have arrows on its fully reduced form.

In conclusion the only arrows on a fully reduced planar knotoid state lie on its arc component. The presence of a puncture allows us to attach more information to this arc component: due to the puncture, for planar knotoids it no longer holds that any state is equal to a disjoint union of its state components. Namely, if a loop component of the state encircles the arc component but not the puncture then there is no isotopy moving the loop so that it has an empty interior, therefore it cannot be a factor of a disjoint union. (Note that in the classical setting of planar knotoids we don't have $V0$ moves at our disposal.)
% Indeed: by the Jordan curve theorem the loop has an interior and an exterior, and by assumption one of these contains the arc component while the other contains the puncture. The arc component cannot be moved out of this region due to the forbidden moves; note that in the classical setting of planar knotoids we don't have $V0$ or $ T0$ moves at our disposal. Similarly the puncture cannot be moved out of its region by definition. 
Thus to classify the disjoint union factors of a planar knotoid state, we must keep track of the number of loops separating the arc component from the puncture.

\begin{lemma}
Let $K$ be a planar knotoid, and let $S$ be a state of $K$. Then the fully reduced form of $S$ is a disjoint union of loops with no arrows and a copy of $\Lambda_i$ or $\Lambda'_i$ encircled by $\ell$ loops, with the puncture of the diagram lying outside the outer loop that encircles the arc component. We denote this last factor of the disjoint union by $\left( \Lambda^{(\prime)}_i \right)^\ell$.
\end{lemma}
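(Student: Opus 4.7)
The plan is to combine the earlier classifications of loop and arc components with a topological analysis of how the loops must nest in the punctured plane. Since a planar knotoid $K$ is classical (neither virtual crossings nor bars appear), Lemma \ref{lm:classical_arrows} immediately gives that no loop in a fully reduced state of $K$ carries any arrows, while the earlier discussion of virtual states shows the unique arc component reduces to $\Lambda_i$ or $\Lambda'_i$ for some $i$. In the planar setting these two forms are genuinely distinct since we have neither $V0$ nor $T0$ moves at our disposal. So I immediately obtain the correct labels on each individual component; all that remains is to describe the global combinatorial arrangement.

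Call a loop $L$ of $S$ \emph{separating} if the two components of $\mathbb{R}^2 \setminus L$ contain, respectively, the arc component and the puncture, and \emph{non-separating} otherwise. For a non-separating loop, both the arc and the puncture sit on the same side of $L$; choosing a non-separating loop that is innermost among all non-separating loops ensures that its other side contains no arc, puncture, or further state component, so a planar isotopy contracts it to a small disk in an otherwise empty region, realising it as a disjoint factor of the state. Iterating, every non-separating loop becomes a disjoint factor.

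It then remains to show that the separating loops nest linearly around the arc. If $L_1,L_2$ are both separating, each has the arc in its bounded region (the one not containing $\infty$) and the puncture in its unbounded region, so they cannot lie in each other's exteriors --- this would place the arc in the bounded region of each of $L_1,L_2$ yet disjoint from both, a contradiction. Hence one of $L_1,L_2$ is contained in the bounded region of the other, and iterating arranges all $\ell$ separating loops as concentric curves around the arc component, with the puncture lying outside the outermost of them. This produces exactly the asserted decomposition with central factor $(\Lambda^{(\prime)}_i)^\ell$. The main subtlety I anticipate is justifying that the planar isotopies used to realise both steps (contracting non-separating loops into disjoint disks, and arranging separating loops concentrically) can indeed be carried out within the equivalence relation on planar knotoid states, i.e.\ without ever dragging a state component across the puncture and without forcing distinct separating loops to coincide.
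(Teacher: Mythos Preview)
Your proposal is correct and follows essentially the same route as the paper: use the Jordan curve theorem to split loop components into those that separate the arc from the puncture and those that do not, peel off the non-separating ones as disjoint factors, and observe that the separating ones must nest concentrically around the arc since $S$ has no self-intersections. The paper's proof is considerably terser---it simply notes that a loop with one side free of both arc and puncture splits off, and that the remaining loops nest because $S$ is embedded---whereas you spell out the nesting argument and explicitly invoke Lemma~\ref{lm:classical_arrows} for the arrow-freeness of the loops; but the underlying idea is identical.
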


\begin{proof}
Since $K$ is planar and hence classical, $S$ is free of self-intersections.  Let $C$ be a loop component of $S$. Then by the Jordan curve theorem $C$ has an interior and an exterior. If one of these contains neither the arc component of $S$ nor the puncture, then $S=S'\sqcup C$ for some smaller state diagram $S'$. Otherwise $C$ is one of the $\ell$ loops encircling the arc component. Since $S$ is free of self-intersections, all such loops must be equivalent to a nested set of loops with the arc at its center and the puncture in its exterior.
\end{proof}

% In summary, by successive application of the oriented state expansion any twisted knotoid diagram can be reduced to a linear combination of states, each of which is a disjoint union (after some virtual Reidemeister moves) of fully reduced state components. What these fully reduced components look like depend on the type of knotoid we're working with.

\subsection{Arrow polynomials}

Having taken the time to discuss the oriented state expansion and reduction rules at some length, we can define the arrow polynomial for twisted knotoids with little effort. We begin with recalling the arrow polynomial for virtual knotoids \cite{gugumcu2017}:

\begin{definition}\label{def:arrow}
Let $K$ be a virtual knotoid diagram. The \emph{arrow bracket} of $K$, denoted $\langle K\rangle$ is defined to be the unique polynomial such that:
\begin{enumerate}
    \item $\langle K\rangle$ satisfies the oriented state expansion formulas from Figure \ref{fig:expansion}.
    \item On states, $\langle K\rangle$ distributes multiplicatively over disjoint unions of state components.
    \item $\langle K_0\rangle = d$ and $\langle \Lambda_0 \rangle = 1$.
    \item $\langle K_i\rangle=dK_i$ and $\langle \Lambda^{(\prime)}_i \rangle=\Lambda^{(\prime)}_i$, where we introduce the symbols $K_i$, $\Lambda_i$, and $\Lambda^{\prime}_i$ as commuting variables in the value of $\langle K\rangle$.
    % \item The value of $\langle K\rangle$ on a state component without arrows is $d$ if the component is a loop, and $1$ if it is an arc.
    % \item The value of $\langle K\rangle$ on a state component with $2i$ arrows in its reduced form is $dK_i$ if the component is a loop, and $\Lambda_i^{(\prime)}$ if it is an arc.
\end{enumerate}
Thus $\langle K\rangle$ takes values in the ring of polynomials over $A,B,d$, and countably many commuting variables indexed by the fully reduced state components for virtual knotoids, or in other words $\langle K\rangle \in \mathbb{Z}[A,B,d, \{K_i, \Lambda_i, \Lambda'_i\}_{i\in \mathbb{Z}_{\geq 0}} ]$.
\end{definition}

We note that each summand of $\langle K\rangle$ is linear in exactly one of $\Lambda_i$ or $\Lambda_i^\prime$ for $i\geq 0$, and constant in the other. Higher powers of $\Lambda_i$ and $\Lambda_i^{(\prime)}$ are attained by the generalization of $\langle K\rangle$ to \textit{linkoids}; see Definition \ref{def:linkoid_poly}.

% \csl{The last assertion in the definition is a bit misleading. Indeed,
% the contribution of each state to the arrow polynomial is at most
% linear in the variables~$\Lambda_i^{(')}$.}

\begin{definition}
For $K$ a virtual knotoid diagram, the \emph{arrow polynomial} $\langle K\rangle_A$ of $K$ is defined to be $\langle K\rangle$ evaluated at $B=A^{-1}$ and $d=(-A^2-A^{-2})$. As such $\langle K\rangle_A\in \mathbb{Z}[A^{\pm1}, \{K_i, \Lambda_i, \Lambda'_i\}_{i\in \mathbb{Z}_{\geq 0}} ]$.
\end{definition}

% \begin{definition}\label{def:arrow}
% Let $K$ be a virtual knotoid diagram. The \emph{arrow polynomial} of $K$, denoted $\langle K\rangle_A$, is defined to be the unique polynomial such that:
% \begin{enumerate}
%     \item $\langle K\rangle_A$ satisfies the oriented state expansion formulas from Figure \ref{fig:expansion}.
%     \item On states, $\langle K\rangle_A$ distributes multiplicatively over disjoint unions of state components.
%     \item The value of $\langle K\rangle_A$ on a state component without arrows is $-A^2-A^{-2}$ if the component is a loop, and $1$ if it is an arc.
%     \item The value of $\langle K\rangle_A$ on a state component with $2i$ arrows in its reduced form is $(-A^2-A^{-2})K_i$ if the component is a loop, and $\Lambda_i^{(\prime)}$ if it is an arc.
% \end{enumerate}
% Thus $\langle K\rangle_A$ takes values in the ring of polynomials over countably many commuting variables indexed by the fully reduced state components for virtual knotoids, or in other words $\langle K\rangle_A\in \mathbb{Z}[A^{\pm1}, \{K_i, \Lambda_i, \Lambda'_i\}_{i\in \mathbb{N}} ]$.
% \end{definition}

% The twisted arrow polynomial subsumes the arrow polynomial for spherical knotoids \cite{gugumcu2017}, and directly generalizes the twisted arrow polynomial for links \cite{deng2022}. However the twisted arrow polynomial does not subsume the loop arrow polynomial for planar knotoids \cite{goundaroulis2017}.

The arrow polynomial is itself not an invariant of virtual knotoids, but of \emph{framed} virtual  knotoids. These are virtual knotoid diagrams on $S^2$ considered up to all the relevant moves from Figure \ref{fig:moves} except for $R1$; see \cite{moltmaker2022,moltmaker2023} for a detailed account of framed classical knotoids. However, the effect of an $R1$ move on $\langle K\rangle_A$ is just multiplication by a factor $-A^{\pm 3}$, depending on the sign of the crossing involved in the move. As such $\langle K\rangle_A$ is easily `unframed' using the writhe $\text{wr}(K)$:

\begin{proposition}\label{prop:invariance}
The normalized arrow polynomial $(-A^3)^{-\text{wr}(K)} \langle K\rangle_A$ is an invariant of virtual knotoids.
\end{proposition}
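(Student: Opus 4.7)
The plan is to verify invariance of $(-A^3)^{-\mathrm{wr}(K)}\langle K\rangle_A$ under every move generating the equivalence of virtual knotoid diagrams except $R1$, and then to compute the effect of $R1$ directly and check that the writhe normalization exactly cancels it. Throughout, the substitution $B=A^{-1}$, $d=-A^2-A^{-2}$ gives the two key identities $AB=1$ and $A^2+B^2+ABd=0$ that power the computation, in complete analogy with the classical Kauffman bracket.

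First, I would dispatch the purely virtual moves $V1$--$V4$, the mixed $R$-$V$ move, and the $V0$ move. The oriented state expansion of Figure \ref{fig:expansion} is applied only at classical crossings and never at virtual crossings, so a virtual move induces a bijection between the states of the two diagrams that preserves the $A,B$-weight, the partition into state components, and the arrow decorations. In particular the reduced form of each state component is preserved, so $\langle K\rangle_A$ is literally unchanged term by term. Since virtual moves also fix $\mathrm{wr}(K)$, the normalized polynomial is invariant.

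Next, $R2$ invariance. I would expand both crossings of the $R2$ configuration simultaneously, obtaining four states weighted $A^2$, $AB$, $BA$, $B^2$, handling the two sub-cases (parallel and antiparallel strands) separately. Using $AB=1$, the two mixed terms collapse onto the trivial two-arc configuration. The $B^2$ term produces the same two-arc configuration, but decorated with two pairs of arrows that are adjacent and co-oriented, hence cancel in pairs by the reduction rule of Figure \ref{fig:reduction}. The $A^2$ term produces the two-arc configuration together with a disjoint closed loop worth a factor of $d$. Collecting, the coefficient of the "extra closed loop" contribution is $A^2+B^2+ABd = A^2+A^{-2}-A^2-A^{-2}=0$, so the four states sum to the original uncrossed pair of arcs as required. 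Invariance under $R3$ then follows from $R2$ by the standard Kauffman argument: expand one of the three crossings, and apply an $R2$ move to each of the two resulting summands to match the expansion on the opposite side of the $R3$ equation.

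Finally, for $R1$ I would expand the single crossing of a kink. The $A$-smoothing detaches a small disjoint closed loop (contributing a factor $d$) from a straightened strand, while the $B$-smoothing produces a straightened strand with two adjacent co-oriented arrows, which reduce to an undecorated straightened strand by Figure \ref{fig:reduction}. The coefficient of the straightened strand is $Ad+B$ or $Bd+A$ depending on the sign of the kink, which evaluates to $-A^{\pm 3}$; and since a single $R1$ changes $\mathrm{wr}(K)$ by $\pm 1$ with the matching sign, the factor $(-A^3)^{-\mathrm{wr}(K)}$ exactly cancels this defect. The main obstacle is the bookkeeping of arrows in the $R2$ case: one must check in each orientation sub-case that the arrows produced by the disoriented smoothings really are adjacent and co-oriented so that the reduction rule applies and returns the desired trivial two-arc state, rather than leaving a genuinely decorated component that would spoil the cancellation.
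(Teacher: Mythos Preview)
The paper does not actually prove this proposition: it simply refers the reader to \cite{gugumcu2017}. Your proposal supplies the direct argument, and the outline you give --- dispatch the virtual moves by noting the state expansion ignores virtual crossings, verify $R2$ and $R3$ via the Kauffman identities $AB=1$ and $A^2+B^2+ABd=0$ together with the arrow reduction rule, then compute the $R1$ defect and cancel it with the writhe --- is exactly the standard proof one finds in the cited reference and in the Dye--Kauffman treatment for virtual links.

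One caution: your $R2$ bookkeeping is somewhat garbled as written. In an $R2$ configuration the two crossings have opposite sign, so the oriented smoothing at the positive crossing carries weight $A$ while the oriented smoothing at the negative crossing carries weight $B$. The four states therefore do not sort into ``two mixed $AB$ terms giving trivial arcs, a $B^2$ term with cancelling arrows, and an $A^2$ term with an extra loop'' in the way you describe; in particular, if only the $A^2$ term produced the extra loop then the coefficient of that loop would be $A^2d$, not $A^2+B^2+ABd$. The correct accounting (done separately for the parallel and antiparallel orientation sub-cases) has one state equal to the trivial arcs, one state equal to the trivial arcs together with a small loop whose arrows cancel, and two states whose arrow decorations reduce to the same configuration and whose coefficients sum to zero via $A^2+B^2+ABd=0$. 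You have already flagged this arrow bookkeeping as the main obstacle, so this is a matter of redoing the case analysis carefully rather than a gap in the strategy.
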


For a proof of this proposition, see \cite{gugumcu2017}. As a corollary to Lemma \ref{lm:classical_arrows} recall that classical states will have no components $K_i$:

\begin{corollary}
For spherical knotoids, $\langle K\rangle_A\in \mathbb{Z}[A^{\pm1}, \{\Lambda_i\}_{i\in \mathbb{N}}, \{\Lambda'_i\}_{i\in \mathbb{N}} ]$.
\end{corollary}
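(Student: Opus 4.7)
The plan is to read off the conclusion directly from Lemma \ref{lm:classical_arrows} combined with the defining rules of $\langle K \rangle_A$. A spherical knotoid is classical (no virtual crossings and no bars), so every state produced by the oriented expansion decomposes, after full reduction, into a single arc component together with some number of loop components. By Lemma \ref{lm:classical_arrows}, each loop component in the reduced form carries no arrows and hence is a copy of $K_0$ rather than any $K_i$ with $i \geq 1$.

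The next step is to translate this into a statement about which variables can actually appear in $\langle K \rangle_A$. By item (3) of Definition \ref{def:arrow}, each loop contributes a factor $\langle K_0 \rangle = d$, and upon specializing $B = A^{-1}$ and $d = -A^2 - A^{-2}$ this factor lies in $\mathbb{Z}[A^{\pm 1}]$. In particular, none of the formal variables $K_i$ (for $i \geq 1$) are introduced into the polynomial, since by item (4) they only enter when a reduced loop carries $2i \geq 2$ alternating arrows, and Lemma \ref{lm:classical_arrows} forbids this in the classical setting.

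The arc component of each state, on the other hand, is permitted to carry arrows; after full reduction it yields exactly one factor of the form $\Lambda_i$ or $\Lambda_i'$ for some $i \geq 0$, per the discussion preceding Definition \ref{def:arrow}. Summing over all states of $K$, with coefficients in $\mathbb{Z}[A,B]$ that become Laurent polynomials in $A$ after the specialization, the total polynomial is thus a $\mathbb{Z}[A^{\pm 1}]$-linear combination of monomials in the variables $\Lambda_i$ and $\Lambda_i'$, giving the claimed inclusion $\langle K \rangle_A \in \mathbb{Z}[A^{\pm 1}, \{\Lambda_i\}_{i \in \mathbb{N}}, \{\Lambda_i'\}_{i \in \mathbb{N}}]$.

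There is essentially no obstacle here, since Lemma \ref{lm:classical_arrows} does all the work; the only thing to be careful about is the bookkeeping distinction between a loop component being \emph{present} (which contributes a numerical factor of $d$) and a loop component contributing a formal variable $K_i$. Once one keeps this distinction straight, the corollary is immediate.
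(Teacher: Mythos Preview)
Your proof is correct and follows exactly the paper's approach: the paper presents this as an immediate corollary to Lemma~\ref{lm:classical_arrows}, and you have simply spelled out the bookkeeping that makes this implication explicit. There is nothing more to add.
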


In fact we have the following closed expression for $\langle K\rangle_A$ on spherical knotoids:
\[
    \langle K\rangle_A = \sum_{s\in S(K)} A^{\sigma(s)} (-A^2-A^{-2})^{\lvert s \rvert -1} \langle s \rangle.
\]
Here $S(K)$ is the set of states of $K$, $\sigma(s)\in\mathbb{Z}$ is the number of $A$-smoothings in $s$ minus the number of $B$-smoothings in $s$ (refer to Figure \ref{fig:expansion}), and $\lvert s\rvert$ is the number of components in $s$. Finally $\langle s \rangle\in \{1,\Lambda_i,\Lambda'_i\}_{i\in\mathbb{N}}$ is the variable corresponding to the reduced arc component in $s$.

\begin{example}
Let $K$ be the spherical knotoid depicted in Figure \ref{fig:spherical_example}, which also depicts the states of $K$ and their weights. Then from these weights we find that
\[
    \langle K\rangle_A = \left(- A^3 + A^{-1} -A^{-5}\right) + \left( A - A^5 \right)\Lambda_1.
\]

% An example computation of $\langle K\rangle_A$ for a 3-crossing spherical knotoid diagram is depicted in Figure \ref{fig:spherical_example}.

\begin{figure}[ht]
    \centering
    \includegraphics[width=.85\linewidth]{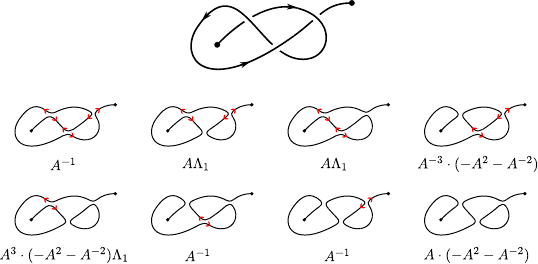}
    \caption{Example spherical knotoid $K$ and its states.}
    \label{fig:spherical_example}
\end{figure}
\end{example}

Next we extend the arrow polynomial to twisted knotoids:

\begin{definition}
Let $K$ be a twisted knotoid diagram. The \emph{twisted arrow bracket} $\langle K\rangle^t$ of $K$ is the unique polynomial satisfying the same rules as in Definition \ref{def:arrow}, except that in rule 4 we further introduce a variable $K_{1/2}$, declare that $\langle K_{1/2} \rangle^t = K_{1/2}$, and set $\Lambda'_i=\Lambda_i$. The \emph{twisted arrow polynomial} $\langle K\rangle^t_A$ of $K$ is then defined as the $\mathbb{Z}[A^{\pm1}, K_{1/2}, \{K_i, \Lambda_i\}_{i\in \mathbb{Z}_{\geq 0}} ]$-valued polynomial obtained from $\langle K\rangle^t$ by the substitution $B=A^{-1}$, $d=(-A^2-A^{-2})$.
\end{definition}

Note in particular that a virtual knotoid diagram, when seen as a twisted knotoid diagram, will have a twisted arrow polynomial different from its arrow polynomial: they can be made equal by putting $\Lambda'_i=\Lambda_i$. This is our reason for reserving separate notation $\langle K\rangle^t_A$ for the twisted arrow polynomial of virtual knotoids. Again, the twisted arrow polynomial is only an invariant of \emph{framed} twisted knotoids, but is easily unframed:

\begin{proposition}
The normalized twisted arrow polynomial $(-A^3)^{-\text{wr}(K)} \langle K\rangle^t_A$ is an invariant of twisted knotoids.
\end{proposition}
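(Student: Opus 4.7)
The plan is to mirror the standard Kauffman-bracket proof of invariance: show that the unnormalized bracket $\langle K\rangle^t$ is invariant under every twisted Reidemeister move except $R1$, and that the $R1$-move multiplies it by $-A^{\pm 3}$, so that the writhe-normalization $(-A^3)^{-\mathrm{wr}(K)}\langle K\rangle^t_A$ is invariant under all moves. Since $\mathrm{wr}(K)$ itself is invariant under every move except $R1$ (where it changes by $\pm 1$), the two normalizations cancel.

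First I would dispense with the easy moves. For the classical moves $R2$ and $R3$, the computation is formally the same as for the ordinary Kauffman bracket: expand each crossing via the oriented state expansion of Figure \ref{fig:expansion}, collect the terms, and verify that the identity $AB = 1$ together with $d = -A^2 - A^{-2}$ forces equality. One only has to additionally check that in every local diagram the arrows created on the disoriented smoothings combine, via the reduction rule in Figure \ref{fig:reduction}, into configurations that are equivalent as fully reduced state components on both sides of the move; here the key observation is that the two disoriented smoothings produced by opposite crossings carry oppositely directed arrow pairs that cancel cleanly. For the virtual moves $V1,V2,V3$ and the mixed move $V4$, virtual crossings are never smoothed, so both sides of each move produce identical states (up to moving virtual crossings past arcs or bars, which does not affect reduced state components); invariance is immediate. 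For $R1$, the standard single-crossing expansion yields the factor $-A^{\pm 3}$, accounting for the normalization.

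Next I would handle the twisted moves $T1$, $T2$, $T3$ and the endpoint moves $V0$, $T0$. For $T1$, expand the single crossing of the move; the two resulting states differ from the bar-only diagram by (i) an extra bar on a loop, and (ii) an arrow that can be passed across the bar using Lemma \ref{lm:bar_passing}, so after reduction the weighted sum reproduces the identity. For $T2$, the two bars on the same arc cancel by the left reduction rule of Figure \ref{fig:twisted_reduction}, so both sides of the move produce identical reduced states. For $T3$, expand the crossings on both sides and apply Lemma \ref{lm:bar_passing} to match states term by term; this is the twisted analogue of the virtual $V3$ check. The endpoint moves $V0$ and $T0$ require separate attention because they act on the arc component rather than on a loop: under $V0$, the arc acquires or loses an arc-virtual crossing that does not affect the reduced arc component (which is either $\Lambda_i$ or, in the twisted setting, still $\Lambda_i$ since we set $\Lambda'_i=\Lambda_i$); under $T0$, a bar is created or destroyed at an endpoint, and since a bar on an arc may be pushed to the tail and discarded in the reduced form, the polynomial is unchanged.

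The main obstacle will be the $T3$ move and, more subtly, the consistency of the identification $\Lambda'_i = \Lambda_i$ with the $T0$ move. One has to check that every way of producing a $\Lambda'_i$ in one branch of an expansion is matched, after applying $T0$-generated bar manipulations and Lemma \ref{lm:bar_passing}, by the corresponding $\Lambda_i$ contribution from the other branch; the paper's earlier demonstration that $\Lambda'_i$ and $\Lambda_i$ are equivalent as twisted reduced state components is precisely what makes this work, and one can invoke it locally in each move check. Once each of the above local verifications is complete, invariance of $\langle K\rangle^t$ under the non-$R1$ moves follows, and combining with the $R1$ computation gives invariance of the normalized twisted arrow polynomial.
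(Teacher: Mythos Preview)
Your approach is correct in outline but takes a considerably longer route than the paper. The paper's proof is two sentences: for every move in Figure~\ref{fig:moves} other than $V0$ and $T0$, invariance is already established in the twisted-link literature (specifically \cite{deng2022}), and for $V0$ and $T0$ invariance is immediate because those two moves are themselves allowed as moves on state diagrams (recall the beginning of Section~\ref{subsec:states}, twisted case). That second observation is cleaner than your argument for the endpoint moves: rather than pushing bars to the tail and invoking the $\Lambda_i=\Lambda_i'$ identification, one simply notes that applying $V0$ or $T0$ to a diagram and then smoothing produces a state that differs from the original state by exactly a $V0$ or $T0$ move, which by definition does not change the reduced state.

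Your direct, move-by-move verification would also succeed, and has the advantage of being self-contained. Two cautions, though. First, your description of $T1$ (``expand the single crossing of the move'') does not match the usual $T1$ move, which involves no classical crossing; you should check the convention in Figure~\ref{fig:moves} before writing out that case. Second, your sketch for $T3$ as ``the twisted analogue of the virtual $V3$ check'' understates what is needed: $T3$ is the move in which a classical crossing passes through a pair of bars, and its invariance is precisely what forces the extra reduction rule on the right of Figure~\ref{fig:twisted_reduction}; Lemma~\ref{lm:bar_passing} alone is not enough, and the actual verification (carried out in \cite{deng2022}) uses that rule directly. If you intend a self-contained proof, that is the case requiring genuine work.
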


\begin{proof}
Except for $V0$ and $T0$, invariance under the moves from Figure \ref{fig:moves} follows directly from the proof of the analogous statement for virtual links; see \cite{deng2022}. Invariance under $V0$ and $T0$ is trivial since these moves are also allowed for state diagrams.
\end{proof}

\begin{example}\label{ex:twisted}
Let $K$ be the twisted knotoid depicted in Figure \ref{fig:twisted_example}, which also depicts the states of $K$ and their weights. Then we find that
\[
    \langle K\rangle_A^t = \left(A^2 + 1 + A^{-2}\right) + \left( -A^2 - A^{-2} \right)\Lambda_1 K_1.
\]

\begin{figure}[ht]
    \centering
    \includegraphics[width=.5\linewidth]{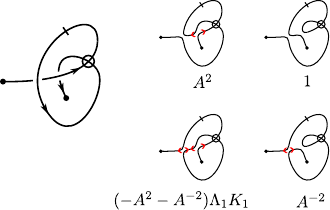}
    \caption{Example twisted knotoid $K$ and its states.}
    \label{fig:twisted_example}
\end{figure}
\end{example}

% \csl{
\begin{remark}\label{rk:arrow_mult}
    There is a natural operation of concatenation for knotoids and virtual %twisted
    knotoids. It is defined for diagrams and consists in attaching the
    head of the first knotoid to the tail of the second one, so that the
    tail of the new knotoid is the tail of the first of the two knotoids,
    while its head is that of the second one.
    Obviously, this operation is well-defined: concatenation of equivalent 
    knotoid diagrams produces equivalent knotoid diagrams. 
    % We do not know, however, whether the concatenation is a commutative operation.
    It is, however, not a commutative operation.
    The arrow polynomial is multiplicative with respect to concatenation,
    % if we set $\Lambda_i\Lambda_j=\Lambda_{i+j}$, for $i=1,2,\dots$.
    if we impose $\Lambda_i\cdot \Lambda_j = \Lambda_{i+j}$ and $\Lambda^\prime_i\cdot \Lambda^\prime_j = \Lambda^\prime_{i+j}$, and for $i>j$ impose that $\Lambda^\prime_i\cdot \Lambda_j = \Lambda^\prime_{i-j}$ and $\Lambda_i\cdot \Lambda^\prime_j = \Lambda_{i-j}$.
\end{remark}
% }

Finally we define the arrow polynomial for planar knotoids, which is called the `loop' arrow polynomial \cite{goundaroulis2017}. This definition is nearly identical to that for virtual knotoids, except that planar states decompose very differently from virtual states. Hence the loop arrow polynomial takes values in a polynomial ring with different variables, again necessitating a separate definition and notation.

\begin{definition}
Let $K$ be a planar knotoid diagram. The \emph{loop arrow bracket} of $K$, denoted $\langle K\rangle^\ell$, is the unique polynomial satisfying the same rules as in Definition \ref{def:arrow}, carefully noting the meaning of `disjoint union' for the planar setting in rule 2. The \emph{loop arrow polynomial} $\langle K\rangle^\ell$ of $K$ is obtained by the substitution $B=A^{-1}$, $d=(-A^2-A^{-2})$ as before.
\end{definition}

As a corollary to the proof of Proposition \ref{prop:invariance}, the normalized loop arrow polynomial $(-A^3)^{-\text{wr}(K)} \langle K\rangle^\ell_A$ is an invariant of planar knotoids.

\begin{example}
Let $K$ be the planar knotoid depicted in Figure \ref{fig:twisted_example}, in which the exterior region is made explicit by a point at infinity (recall Remark \ref{rk:punctures}). From the states and weights of $K$ in Figure \ref{fig:planar_example} we find
\[
    \langle K\rangle_A^\ell = A^{-3} + (A^{-1} - A^3) \Lambda_1 + A^{-1} (\Lambda_0)^1 + 2A (\Lambda_1)^1 + A^3(\Lambda_1)^2.
\]

\begin{figure}[ht]
    \centering
    \includegraphics[width=.75\linewidth]{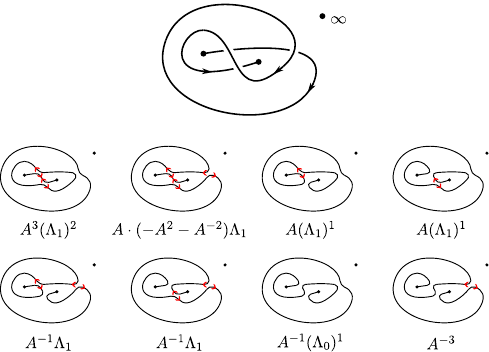}
    \caption{Example planar knotoid $K$ and its states.}
    \label{fig:planar_example}
\end{figure}
\end{example}

% \newpage 

\section{Marked ribbon graphs and Bollob\'{a}s-Riordan polynomials}\label{sec:BR}

In this section we introduce (various augmentations of) the Bollob\'{a}s-Riordan polynomial. This is a polynomial invariant of ribbon graphs. As such we first introduce ribbon graphs, with various decorations.

% \cw{As Jo suggested I moved everything about ribbon graphs here. For now I just pasted it over; will edit the text correspondingly soon.}

\subsection{Marked ribbon graphs}

\begin{definition}
A \emph{ribbon graph} $G=(V(G),E(G))$ is a surface with boundary consisting of a union of two sets of disks, a set $V(G)$ of \emph{vertices} and a set $E(G)$ of \emph{edges}, such that:
\begin{enumerate}
    \item The vertices and edges intersect in disjoint line segments.
    \item Each such line segment lies on exactly one vertex and exactly one edge.
    \item Each edge contains exactly two such intersections.
\end{enumerate}
\end{definition}

Ribbon graphs are naturally represented by drawings, known as ribbon graph diagrams, in the same way that graphs are: we draw vertices as circular disks connected by edges drawn as long ribbons, whose short sides intersect the disks along their boundary. See Figure \ref{fig:ribbon_graph} for an example. In these diagrams, if any edges cross each other we don't make keep track of any over/under-crossing information, as both choices for this information yield the same surface (up to a homeomorphism that induces the identity isomorphism of the underlying abstract graph) and hence represent the same ribbon graph. Further, if the edge make some number of half-twists then two adjacent half-twists cancel since putting a rotation of $2\pi$ into a ribbon does not change the resulting surface (again, up to homeomorphism). Thus without loss of generality each edge in a ribbon graph diagram contains at most one half-twist.

\begin{figure}[ht]
    \centering
    \includegraphics[width=.25\linewidth]{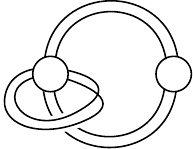}
    \caption{A ribbon graph diagram.}
    \label{fig:ribbon_graph}
\end{figure}

\begin{remark}\label{rk:ribbon_equivalences}
Ribbon graphs are known to be equivalent to cellularly embedded graphs in compact surfaces, with the equivalence being given by noting that the boundary of a ribbon graph $G$ is a set of circles and that sewing disks onto each of these boundary circles casts $G$ as being cellularly embedded in the resulting compact surface. We also note that ribbon graphs are equivalent to \emph{signed rotation systems}, which consist of an abstract graph, together with a cyclic ordering of the half-edges at every vertex as well as a sign `$+$' or `$-$' associated to every edge. These rotation systems are considered up to \emph{local switches}, which mean reversing the cyclic order at some vertex, and toggling the sign of every edge incident to that vertex. The bijection from ribbon graphs to rotation systems is then given by sending a ribbon graph diagram to its underlying graph, with the cyclic ordering being that of the edge ribbons on the original vertex discs, and the sign of an edge being determined by whether or not the edge makes a half-twist. See \cite{ellis2013} for more details.
\end{remark}

For the purposes of Thistlethwaite's theorem, one associates (edge-labelled) ribbon graphs to link diagrams roughly as follows: choosing a state in the Kauffman state sum for the bracket polynomial of a link diagram (recall \cite{kauffman1987}, or see Section \ref{sec:arrow}),
% \cj{'Kauffman state' is not defined in Section 3 or anywhere else I could find.  And of course nothing in \cite{kauffman1987} is actually called a \emph{Kauffman} state.  So we should say what we mean. } 
let the vertices be the resulting disjoint cycles the diagram falls into, and place edges bridging each of the smoothed crossings in the state. The edges are further labeled with a `$+$' or a `$-$', in accordance with the power of A that is incurred by the Kauffman bracket from the choice of smoothing at that edge's site. The original link diagram can then be recovered from this edge-labelled ribbon graph.

% \csl{A general question, not related to the present paper. One associates a
% ribbon graph to each Kauffman state of a link diagram. 
% In turn, to a ribbon graph its delta-matroid
% is associated. If anybody studied the class of delta-matroid invariants possessing
% the following property: the state sum of the values of such an invariant over the
% set of all Kauffman states is invariant under the Reidemeister moves and
% defines thus a link invariant?
% } \cj{Not that I can think of off hand.  This might be related to the question of invariants that are constant on the twuals (or at least duals) of a given graph.  This if the Kauffman states correspond to the cycle family graphs --I'd need to check.  }

This rough outline of the construction serves as a motivating for defining `marked' ribbon graphs. A more detailed account will follow (for the case of knotoids) where it is relevant, in Section \ref{sec:Thistlethwaite}. For now, suppose we wish to apply the same construction to a knotoid diagram. Doing so, the object we obtain is almost a ribbon graph, except that a Kauffman state of a knotoid diagram consists of disjoint cycles as well as a single arc component, which we also wish to interpret as a ribbon graph vertex. In light of Remark \ref{rk:ribbon_equivalences}, we can treat this arc component of the state as a ribbon graph vertex but with a \emph{linear order} on its incident half-edges, rather than a cyclic order. This naturally leads to the definition of a `marked' ribbon graph:

\begin{definition}
A \emph{marked ribbon graph} is a ribbon graph $G$, some of whose vertex disks have been decorated by a single \emph{marking} on the vertex's boundary away from intersections with the edges of $G$. These markings 
% cannot be passed over adjacent edges, and \cj{it might be safer (less likely to introduce confusion) if we omit 'cannot be passed over adjacent edges, and'} 
serve as a `starting point' for the cyclic order of half-edges on the vertex, thereby linearizing it. Marked vertices are equipped with a choice of linearization, i.e. an orientation of their vertex-disc boundaries.
\end{definition}

In most of the remainder of this paper we will encounter marked ribbon graphs with just one marked vertex. In Section \ref{sec:linkoids} we briefly treat the case of marked ribbon graphs with any number of marked vertices.

A marked ribbon graph diagram is just a ribbon graph diagram, with the markings drawn on it. To evoke the application of marked ribbon graphs that we have in mind here, we shall draw the markings as two endpoints connected with a dotted line completing the vertex disk, leaving the impression that this vertex disk is in some sense actually an arc. See Figure \ref{fig:marked_ribbon_graphs} for an example.

% \begin{figure}[ht]
%     \centering
%     \includegraphics[width=.3\linewidth]{placeholder marked_ribbon_graph_placeholder.png}
%     \caption{A marked ribbon graph diagram.}
%     \label{fig:marked_ribbon_graph}
% \end{figure}

The marked ribbon graphs we will be concerned with in this paper may further be decorated in various ways: the edges will be \emph{signed}, i.e.~labelled with either a `$+$' or a `$-$', and the vertices and edges will be decorated with \emph{arrows} and \emph{bars}. These arrows can lie anywhere on the boundaries of the vertices and edges, including on their intersection arcs. 
% \cj{I think that in order to define partial duality with these arrows cleanly we need to require that arrows lie only on the boundaries of edges (so also possibly on the intersections with vertices), and that all arrows on an edges must be consistently directed along its boundary.  This is actually the only way they show up when we go to use them later on I think, so this restriction should not cause problems. } 
The bars can only lie on the boundaries of vertices, and must be placed away from intersection arcs with edges. Bars are drawn as small line segments transversal to the vertices' boundaries. See Figure \ref{fig:marked_ribbon_graphs} for an example marked ribbon graph with all possible decorations.

% \begin{figure}[ht]
%     \centering
%     \includegraphics[width=.3\linewidth]{placeholder decorated_ribbon_graph_placeholder.png}
%     \caption{A marked ribbon graph diagram, decorated with signs, arrows, and bars.}
%     \label{fig:decorated_ribbon_graph}
% \end{figure}
\begin{figure}[ht]
    \centering
    \includegraphics[width=.6\linewidth]{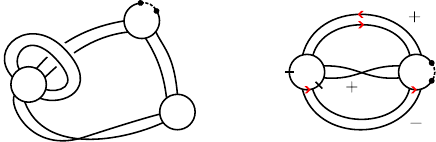}
    \caption{A marked ribbon graph diagram (left) and another decorated with signs, arrows, and bars (right). %\cj{the $+$ and $-$ signs should be on the edges, or at least with out circles aroudn them (make them bigger and darker as needed)} \cw{This is in accordance with \cite{bradford2012arrow}, but I don't mind changing it throughout.}
    }
    \label{fig:marked_ribbon_graphs}
\end{figure}

\begin{remark}
For the purposes of Thistlethwaite's theorem, one obtains a marked ribbon graph from a knotoid diagram as described above using a Kauffman state. The components of such a state are a set of loops and a single interval. The loops of the state again form its vertices, and the interval forms a marked vertex by fusing its endpoints into a marking on a vertex. This last step to obtain a marked ribbon graph may seem unnecessary: we could also have worked with the ribbon graph diagram in which the linearized vertex is represented by an arc, instead of a marked vertex. Representing marked vertices as arcs in this way results in what one might call a `ribbonoid graph'. Since the data included in a marked ribbon graph is purely combinatorial, these formulations are entirely equivalent. In this article we choose to work solely with marked ribbon graphs for clarity, as these more closely resemble the familiar framework of ribbon graphs.
\end{remark}

% partial duality
A fundamental operation for ribbon graphs is \textit{partial duality}. This operation generalizes the standard duality for embedded graphs to allow one to take the dual of a subset of the graph's edges. The standard dual of a graph is then equal to its partial dual with respect to the entire edge-set. Here we define partial duality for marked ribbon graphs.

\begin{definition}
Let $G$ be a marked ribbon graph, potentially decorated with arrows, and $e$ an edge of $G$. Then the \textit{partial dual} $G^e$ of $G$ with respect to $e$ is obtained by swapping which boundary arcs of $e$ are attaching arcs for vertices, and which are not. More precisely, the vertex-attaching arcs of $e$ are removed, and the remaining arcs of $e$ are interpreted as new vertex-arcs. Then, new edge-arcs are placed parallel to the removed attaching arcs, forming an edge-disk that attaches to the new vertex-arcs. See Figure \ref{fig:partial_dual}, where the edge $e$ in $G^e$ has been pushed out of the plane for clarity. Any arrow decorations or vertex markings on the ribbon graph are left in place during this process, with those present on the removed attaching arcs being placed on the new edge-arcs; see Figure \ref{fig:partial_dual}.

If $D\subseteq E(G)$ is a subset of edges of $G$ then the \textit{partial dual} $G^D$ of $G$ with respect to $D$ is formed by carrying out the same local replacement for each $e\in D$.
\end{definition}

\begin{figure}[ht]
    \centering
    \includegraphics[width=.55\linewidth]{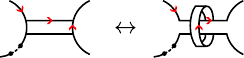}
    \caption{Partial duality with respect to an edge marked with arrows. The vertices may be marked, e.g.~as depicted, or unmarked.}
    \label{fig:partial_dual}
\end{figure}

As terminology suggests partial duality is a self-dual operation, in the sense that $(G^D)^D=G$ for all $D\subseteq E(G)$, so that the two-sided arrow in Figure \ref{fig:partial_dual} is indeed justified. Moreover the partial dual of a ribbon graph with respect to $E(G)$ yields the usual dual of $G$, i.e. $G^{E(G)}=G^*$.

\subsection{Bollob\'{a}s-Riordan polynomials}

In this section we restrict our attention to marked ribbon graphs with only a single marked vertex, as these are the graphs arising from knotoids; see Section \ref{sec:Thistlethwaite}. For a discussion of the case of marked ribbon graphs with several marked vertices, see Section \ref{sec:linkoids}.

The multivariate Bollob\'{a}s-Riordan polynomial for (edge-weighted) %\cj{should this be edge-weighted?} 
ribbon graphs can be defined as follows:
\[
    Z_G(a,\mathbf{b},c) = \sum_{F\subseteq E(G)} a^{k(F)} \left( \prod_{e\in F} b_e \right) c^{\text{bc}(F)}.
\]
Here the sum ranges over all graphs $F$ given by keeping only a subset of the edges in $G$, i.e.~the spanning subgraphs of $G$. We also refer to such subgraphs as the \emph{states} of $G$. Furthermore, $k(F)$ is the number of connected components of $F$, $\text{bc}(F)$ is the number of boundary components of $F$, and $\mathbf{b}$ is an array of variables $b_e$ indexed by the edges of $G$.

% \csl{If anybody will be so kind to explain me why people consider functions on edge-labelled
% graphs? Never understood that: polynomials depending on variables indexed by edges
% cannot be used as graph invariants, since different graphs have different edges, and
% their polynomials depend on different sets of variables.}

% \cw{You are right that polynomials depending on edge-indexed variables are somewhat useless as is. I think in some sense the variables $b_e$ are an over-generalization for flexibility. The reason for considering them here is that we later evaluate the variables $b_e$: In the Thistlethwaite theorem the edges are signed, and the value of $b_e$ depends only on this sign.}  \cj{In fact Traldi discusses/proves this-- if the $b_e's$ are indept variables, then you can recover the underlying matroid, so the 'invariant' is basically a tautology.  Indeed, things only become interesting with appropriate choices of evaluations for the weights.}
% \csc{Labeled graph polynomials are useful not only in Traldi's paper. In general for a signed graphs we can specify these edge variable to some concrete expressions to get the sign version of the Tutte polynomial. BTW there are two different versions of the signed Tutte polynomial, one is Kauffman's and another one is from the book of Godsil - Royle. They differ by specifications of these weight variables.}

We extend the multivariate Bollob\'{a}s-Riordan polynomial to decorated marked ribbon graphs by further keeping track of the reduced state information on the boundary components of spanning subgraphs of $G$. We do so in three different ways, corresponding with the three paradigms for which we have separate arrow polynomials: virtual, twisted, and planar knotoids.

\begin{definition}
The \emph{arrow Bollob\'{a}s-Riordan polynomial} for marked ribbon graphs decorated with arrows is defined by
\[
    R_G(a,\mathbf{b},c) = \sum_{F\subseteq E(G)} a^{k(F)} \left( \prod_{e\in F} b_e \right) c^{\text{bc}(F)} \left( \prod_{f\in\partial^c(F)} K_{i(f)/2} \right) \Lambda_i^{(\prime)}.
\]
Here $\partial^c(F)$ denotes the set of circular boundary components of $F$, i.e.~the set of all unmarked boundary components, and $i(f)$ denotes the number of arrows on a component $f\in \partial^c(F)$. The variables $K_i,\Lambda_i,\Lambda_i'$ denote the reduced state components of the boundary of $F$, subject to the same reduction rule from Figure \ref{fig:reduction}. We set $K_0=\Lambda_0^{(\prime)}=1$.
\end{definition}

\begin{definition}
The \emph{twisted Bollob\'{a}s-Riordan polynomial} for marked ribbon graphs decorated with bars and arrows is defined by
\[
    R^t_G(a,\mathbf{b},c) = \sum_{F\subseteq E(G)} a^{k(F)} \left( \prod_{e\in F} b_e \right) c^{\text{bc}(F)} \left( \prod_{f\in\partial^c(F)} K_{i(f)/2} \right) \Lambda_i.
\]
Here $\partial^c(F)$, $i(f)$, and the variables $K_i,\Lambda_i$ are as before, now subject to the reduction rules from Figures \ref{fig:reduction} and \ref{fig:twisted_reduction}.
\end{definition}

\begin{definition}
A \textit{punctured marked ribbon graph} is a marked ribbon graph one of whose edge-discs, vertex-discs, or faces (after capping off boundaries) contains a marked point known as the \textit{puncture}. The \emph{loop arrow Bollob\'{a}s-Riordan polynomial} for punctured marked ribbon graphs decorated with arrows is defined by
\[
    R^\ell_G(a,\mathbf{b},c) = \sum_{F\subseteq E(G)} a^{k(F)} \left( \prod_{e\in F} b_e \right) c^{\text{bc}(F)-\ell} \left( \Lambda_i^{(\prime)} \right)^\ell.
\]
% Here $\partial^d(F)$ is the set of boundary components of $F$ that bound an empty disk when all the other boundary components other than the marked component are removed, i.e.~those components that don't contribute to $\ell$. 
Here the variables $(\Lambda_i)^\ell,(\Lambda'_i)^\ell$ denote the reduced state components of the boundary components of $F$ as before.
\end{definition}

% \csl{It is natural to ask whether the polynomials $R_G$ satisfy some kind of
% deletion-contraction relations. Can we answer this question?}

% \cw{I would guess that a relation analogous to Proposition 2.5 from \cite{bradford2012arrow} might hold.
% }

\begin{example}\label{ex:graph_poly}
Let $G$ be the marked ribbon graph with arrows depicted in Figure \ref{fig:graph_poly}. 
We will compute $R_G(a,\mathbf{b},c)$. 
% To this end we find the contribution of each subset $F\subseteq \{e_1,e_2\}$ to $R_G$. 
The subgraphs corresponding to each subset $F\subseteq \{e_1,e_2\}$ to $R_G$, as well as their contributions, are also given in Figure \ref{fig:graph_poly}. From this, we find that
\[
    R_G(a,\mathbf{b},c) = ab_{e_2}c + (ac + ab_{e_1}c^2 + ab_{e_1}b_{e_2}c )\Lambda_1.
\]

\begin{figure}[ht]
    \centering
    \includegraphics[width=.55\linewidth]{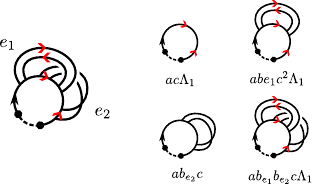}
    \caption{An example marked ribbon graph $G$ with arrow decorations, its subgraphs, and their contributions.}
    \label{fig:graph_poly}
\end{figure}
\end{example}

Next we formulate the contraction-deletion relation for the {arrow Bollob\'{a}s-Riordan polynomial} for marked ribbon graphs decorated with arrows. Here deletion and contraction of edges of marked ribbon graphs are defined as usual for ribbon graphs \cite{bradford2012arrow}, with the rule for the arrow decorations that they are kept in place, except if they land in the interior of a face or vertex after deletion or contraction in which case they are removed. To be precise, this happens when an arrow lies on the boundary of a deleted edge but not on an attaching vertex, or if it lies on the attaching arc of a contracted edge.

% \cj{How are deletion and contraction defined for decorated edges?}

% \cd{Refer to the Table on Page 3 \cite{bradford2012arrow}?}

\begin{proposition}\label{prop:del_cont}
For $G$ a marked ribbon graph decorated with arrows, the arrow Bollob\'{a}s-Riordan polynomial $R_G(a,\mathbf{b},c)$ possesses the following properties with respect to multiplication, deletion, and contraction.
\begin{align*}
R_{G_1\sqcup G_2} &= R_{G_1}\cdot R_{G_2} ;\\
R_G(a,\mathbf{b},c) &= 
\begin{cases}
R_{G-e}(a,\mathbf{b},c) + b_e R_{G/e}(a,\mathbf{b},c) & \text{if $e$ is not an orientable loop,}\\
R_{G-e}(a,\mathbf{b},c) + (b_e/a)R_{G/e}(a,\mathbf{b},c) & \text{if $e$ is a trivial orientable loop.}
\end{cases}
\end{align*}
\end{proposition}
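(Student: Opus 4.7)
The plan is to prove both claims directly from the defining state sum $R_G = \sum_{F\subseteq E(G)}(\cdots)$ by splitting sums and identifying terms, in analogy with the standard deletion-contraction proof for the ordinary multivariate Bollob\'as-Riordan polynomial.

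For multiplicativity, every spanning subgraph of $G_1\sqcup G_2$ decomposes uniquely as $F=F_1\sqcup F_2$ with $F_i\subseteq E(G_i)$, and each quantity in the summand is either additive or multiplicative across this decomposition: $k(F)=k(F_1)+k(F_2)$, $\mathrm{bc}(F)=\mathrm{bc}(F_1)+\mathrm{bc}(F_2)$, the edge-weight product $\prod_{e\in F}b_e$ factors over the two pieces, and $\partial^c(F)$ partitions as $\partial^c(F_1)\sqcup\partial^c(F_2)$ with arrow decorations unchanged. The unique marked boundary component contributing the $\Lambda^{(\prime)}_i$ factor lies on whichever of $G_1,G_2$ carries the marked vertex, so it appears in exactly one of the factor sums (with the other taking the convention $\Lambda^{(\prime)}_0=1$). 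The Cartesian structure of the sum then yields $R_{G_1\sqcup G_2}=R_{G_1}\cdot R_{G_2}$.

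For the contraction-deletion relation, I would partition the sum for $R_G$ according to whether $e\in F$. The summands with $e\notin F$ are in a weight-preserving bijection with spanning subgraphs of $G-e$, contributing $R_{G-e}(a,\mathbf{b},c)$. The summands with $e\in F$ are in bijection with spanning subgraphs $F'\subseteq E(G/e)$ via $F\mapsto F/e$; after pulling out the factor $b_e$, it remains to match the remaining data. When $e$ is not an orientable loop, a local picture near $e$ shows that contraction merges the two attaching vertex-disks of $e$ without changing $k(F)$, $\mathrm{bc}(F)$, or the arrow decorations on any boundary component; this yields the term $b_e\,R_{G/e}$. When $e$ is a trivial orientable loop, contraction detaches a new vertex-disk bounded by the inner arc of $e$, raising $k$ by one while leaving $\mathrm{bc}$ and the arrow data unchanged, so the extra factor of $a$ is exactly absorbed into the $1/a$ of $b_e/a$.

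The main obstacle is the arrow bookkeeping, which is the novel content beyond the classical ribbon-graph case. I would verify it by a local analysis near $e$: arrows on the attaching arcs of $e$ migrate onto the merged (or detached) vertex-disk in $G/e$ along the glued arcs, while arrows on the non-attaching arcs of $e$ travel with the boundary components they traced in $F$. Because the cyclic multiset of arrows on each boundary component of $F/e$ coincides with that of the corresponding component of $F$, the reduction rule from Figure \ref{fig:reduction} gives the same reduced monomial, so each factor $K_{i(f)/2}$ and the marked factor $\Lambda^{(\prime)}_i$ is preserved by the bijection, completing the verification in both cases.
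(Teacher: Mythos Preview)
Your proposal is correct and follows essentially the same approach as the paper's proof: split the state sum according to whether $e\in F$, identify the summands with $e\notin F$ as $R_{G-e}$, and match the summands with $e\in F$ against $b_e R_{G/e}$ (respectively $(b_e/a)R_{G/e}$) via the bijection $F\mapsto F\setminus\{e\}\subseteq E(G/e)$. The paper is terser---it simply asserts that the exponents of $a$ and $c$ are preserved and that $k$ goes up by one in the trivial-orientable-loop case---whereas you spell out the multiplicativity argument and the arrow bookkeeping in more detail. One minor wording issue: by the paper's convention, arrows on the attaching arcs of a contracted edge are \emph{removed} (they land in the interior of the merged vertex), rather than ``migrating'' onto it; but since those arrows were already interior to $F$ when $e\in F$, this does not affect your boundary comparison, and your conclusion that the reduced arrow monomial is unchanged is correct.
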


\begin{proof}
% \csl{For ribbon graphs, a natural multiplicativity property of invariants is 
% with respect to a `connected sum' rather than disjoint union. In other words,
% we usually restrict ourselves with considering connected ribbon graphs only,
% and to define the connected sum we pick an arc on a vertex of the first summand
% and an arc on a vertex of the second summand, and glue the two arcs together
% making the two vertices into a single one. Although the result of this operation
% depends on the choice of the two vertices and the two arcs, the value of
% the multiplicative invariant is independent of this choice. The reason usually
% is that the delta-matroid of such a connected sum is independent of this choice,
% and is the direct sum (or the `product') of the delta-matroids of the factors.
% For decorated marked graphs of the present paper, we are even in a better
% position: the product of two such objects is well-defined since we have a distinguished
% vertex and a distinguished arc in it.} 

% \cj{yes, but is the result then no longer a marked ribbon graph?}

% \cd{Deng- Answer: I think that the result is still true for a marked ribbon graph.}

% \csl{I think, the result will still be a marked ribbon graph. Indeed, if we attach
% the head of the first summand to the tail of the second one, then we still
% have a single distinguished vertex possessing both a head and a tail.}

The first property for $R_{G_1\sqcup G_2}$ is immediate. 
The proof of the contraction-deletion properties follows the standard procedure. For $F\subseteq E(G)$ we denote
\[
    W(F) = a^{k(F)} \left( \prod_{e\in F} b_e \right) c^{\text{bc}(F)} \left( \prod_{f\in\partial^c(F)} K_{i(f)/2} \right) \Lambda_i^{(\prime)}.
\]
One can split the set of edge subsets $F\subseteq E(G)$ into two types according to the property $e\in F$ or $e\not\in F$. The subgraphs of the first and second type may be regarded as edge subsets of $G/e$ and $G-e$, respectively. For an edge $e$ which is not an orientable loop, the exponents of variables $a$ and $c$ will be preserved when we consider $F$ as a spanning subgraph of $G$, or of either $G/e$ or $G-e$. That is, if $e$ is not an orientable loop, then 
\begin{align*}
    R_G(a,\mathbf{b},c) &= 
    \sum_{F\subseteq E(G)} W(F) \\
    &= \sum_{e\notin F_{1}, F_{1}\subseteq E(G)} W(F_1) + \sum_{e\in F_{2}, F_{2}\subseteq E(G)} W(F_2)\\
    &= \sum_{F_{1}\subseteq E(G-e)} W(F_1) + b_{e}\sum_{F_{3}\subseteq E(G/e)} W(F_3)\\
    &= R_{G-e}(a,\mathbf{b},c) + b_{e} R_{G/e}(a,\mathbf{b},c).
\end{align*}
Hence we obtain the first contraction-deletion property. Let $e$ be a trivial orientable loop. Then by the definition of contraction of a loop, a spanning subgraph $F_3$ of $G/e$ corresponding to a subgraph $F_{2}\ni e$ of $G$ always has one more connected component than $F_{2}$, i.e. $k(F_2)=k(F_3)-1$. Hence,
\begin{align*}
    R_G(a,\mathbf{b},c) &= 
    \sum_{F\subseteq E(G)} W(F) \\
    &= \sum_{e\notin F_{1}, F_{1}\subseteq E(G)} W(F_1) + \sum_{e\in F_{2}, F_{2}\subseteq E(G)} W(F_2)\\
    &= \sum_{F_{1}\subseteq E(G-e)} W(F_1) + b_{e}\sum_{F_{3}\subseteq E(G/e)} a^{-1} W(F_3)\\
    &= R_{G-e}(a,\mathbf{b},c) + \frac{b_{e}}{a} R_{G/e}(a,\mathbf{b},c).
\end{align*}
\end{proof}

Analogous deletion-contraction properties hold for the twisted and loop Bollob\'{a}s-Riordan polynomials. We omit the details of these for brevity.

\begin{remark}
As in \cite[Remark.~2.6]{bradford2012arrow}, for a non-trivial orientable loop the evaluation of the {arrow Bollob\'{a}s-Riordan polynomial} at $a=1$ also satisfies a deletion-contraction relation. Namely, for a marked ribbon graph $G$ and $e\in E(G)$,
$$R_G(1,\mathbf{b},c)=R_{G-e}(1,\mathbf{b}_{\not=e},c) + 
b_e R_{G/e}(1,\mathbf{b}_{\not=e},c)\ ,
$$
where $\mathbf{b}_{\not=e}=\{b_{e'}\}_{e'\in E(G)\setminus e}$.
\end{remark}

\begin{remark}
Recall the classical Bollob\'{a}s-Riordan polynomial for ribbon graphs is multiplicative not only over disjoint union, but also over connected sum, i.e. $R_{G_1\# G_2} = R_{G_1}\cdot R_{G_2}$. Here the \textit{connected sum} $G_1\# G_2$ of two ribbon graphs $G_1,G_2$ is formed from $G_1\sqcup G_2$ by gluing a vertex from $G_1$ to a vertex from $G_2$ along arcs away from the edge discs. Clearly the resulting graph depends on the chosen vertices and arcs, but the result for $R_{G_1\# G_2}$ holds regardless of these choices. In our case, since we assume each marked ribbon graph has exactly one marking we can similarly define the \textit{pointed product} of two marked ribbon graphs as the connected sum along the marked segments of their marked vertices, being careful to respect the vertex-discs' orientations. This pointed product $G_1 \overline{\#} G_2$ is well-defined, and we have that $R_{G_1\overline{\#} G_2} = R_{G_1}\cdot R_{G_2}$ if we impose that $\Lambda_i\cdot \Lambda_j = \Lambda_{i+j}$ and $\Lambda^\prime_i\cdot \Lambda^\prime_j = \Lambda^\prime_{i+j}$, and for $i>j$ impose that $\Lambda^\prime_i\cdot \Lambda_j = \Lambda^\prime_{i-j}$ and $\Lambda_i\cdot \Lambda^\prime_j = \Lambda_{i-j}$. The correspondence of this fact to Remark \ref{rk:arrow_mult} follows from Theorem \ref{thm:thistlethwaite} in the next section.
\end{remark}

Finally we consider the behaviour of $R_G$ under partial duality.

% \csl{Partial duality for marked ribbon graphs should be defined. In particular,
% the definition must specify the behavior of arrows and the specific behavior of duality
% with respect to edges one or both of whose ends is the marked vertex.} \cj{See Fig \ref{fig:PartDualDef} for a start on this (I think it ended up at the end of the paper).  We do need to emphasize that any arrows on all our decorated ribbon graph that arise in this application have two arrows on each edge pointing in consistent directions.  This is needed to make the partial duality play nice with the arrows}

% \begin{figure}[htp]
%     \centering
%     \includegraphics[width=.85\linewidth]{placeholder PartDualPlaceHolder.pdf}
%     \caption{This is just a placeholder to double check that the arrows are doing the right thing--once I'm sure, I'll make a new figure. Whoops-- my little red arrows aren't showing up on the figure.  I'll try to fix it tomorrow.}
%     \label{fig:PartDualDef}
% \end{figure}

\begin{proposition}\label{prop:BR_duals}
Let $F\subseteq E(G)$, and let $G^F$ be the partial dual of $G$ with respect to $F$. Then for $a=1$ the arrow Bollob\'{a}s-Riordan polynomial satisfies
\[
    R_G(1,\mathbf{b},c) = \big( \prod_{e\in F} b_e \big) \cdot R_{G^F}(1,\mathbf{b'},c)
\]
where $\mathbf{b'}$ is given by
\[
    b'_e = 
    \begin{cases}
    b_e &\text{ if } e\notin F\\
    1/b_e &\text{ if }e\in F.
    \end{cases}
\]
\end{proposition}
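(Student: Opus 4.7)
The plan is to pair the spanning subgraphs of $G$ with those of $G^F$ via symmetric difference with $F$, and to show that every ingredient of $R_G(1,\mathbf{b},c)$ transforms controllably under this pairing. Concretely, for $H\subseteq E(G)$ let $\phi(H) = H \mathbin{\triangle} F$, viewed as a spanning subgraph of $G^F$ (since $E(G^F)$ is canonically identified with $E(G)$). This is an involution on the shared edge-power-set, so it gives a bijection between the states of $G$ and the states of $G^F$.

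The key geometric input is the standard partial-duality fact for ribbon graphs: the partially-dualized spanning ribbon sub-graph $\phi(H)\subseteq G^F$ has the same boundary circles as $H\subseteq G$, in the strong sense that there is an explicit arc-by-arc bijection between their boundary components. In particular $\text{bc}(\phi(H)) = \text{bc}(H)$, so the powers of $c$ agree term-by-term. Because partial duality of a marked ribbon graph leaves the arrow decorations physically in place (see Figure \ref{fig:partial_dual}) and preserves the marking on the marked vertex, each corresponding pair of boundary components is traced out crossing the same arrow-carrying arcs in the same order, up to at most a cyclic shift of starting point and possibly an orientation reversal of the traversal. The reduction rule of Figure \ref{fig:reduction} is invariant under both of these symmetries, so the reduced loop-component variables $K_{i(f)/2}$ match on each component, and the single marked component contributes the same $\Lambda_i^{(\prime)}$ on the two sides.

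It remains to track the edge weights. Using the disjoint decomposition $\phi(H) = (H\setminus F)\sqcup (F\setminus H)$ one computes
\[
\prod_{e\in \phi(H)} b'_e \;=\; \prod_{e\in H\setminus F} b_e \;\cdot\; \prod_{e\in F\setminus H} b_e^{-1},
\qquad
\prod_{e\in H} b_e \;=\; \prod_{e\in H\setminus F} b_e \;\cdot\; \prod_{e\in H\cap F} b_e.
\]
Taking the ratio and using $(H\cap F)\sqcup (F\setminus H)=F$ yields $\prod_{e\in H} b_e = \bigl(\prod_{e\in F} b_e\bigr)\cdot \prod_{e\in \phi(H)} b'_e$. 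Summing over $H$ and factoring $\prod_{e\in F}b_e$ out gives the claimed identity, provided the $a^{k(\cdot)}$ factor does not intervene; since $k(H)$ and $k(\phi(H))$ need not coincide, this is precisely why we specialize $a=1$.

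The main obstacle is the arrow-preservation step: verifying that the local model of partial duality at a single edge routes the arrow decorations along the corresponding boundary arcs of $G^F$ in the same cyclic order (up to reversal) as along the boundary arcs of $G$. Once this is checked by a case analysis at a single partially-dualized edge (with the attaching-arc/non-attaching-arc swap depicted in Figure \ref{fig:partial_dual}), the preservation of all reduced-state variables $K_{i(f)/2}$ and $\Lambda_i^{(\prime)}$ follows, and the remainder of the argument is routine algebra. The unmarked, purely-arrow version of this argument is essentially the one in \cite{bradford2012arrow}, and the presence of a marking is benign because partial duality is defined locally at each edge and never touches the marked arc of a marked vertex.
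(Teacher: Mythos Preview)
Your proof is correct and follows essentially the same approach as the paper: the symmetric-difference bijection $H\mapsto H\mathbin{\triangle}F$ between states, the boundary-preservation property of partial duality to match the $c$, $K_i$, and $\Lambda_i^{(\prime)}$ factors, and the same edge-weight algebra. Your version is slightly more explicit about why the arrow data survives and about the role of the specialization $a=1$, but the underlying argument is identical to the paper's (which in turn points to \cite[Prop.~2.7]{bradford2012arrow}).
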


\begin{proof}
This proof is similar to that of the analogous statement for unmarked ribbon graphs, see \cite[Prop.~2.7]{bradford2012arrow}. There is a one-to-one correspondence between states of $G$ and states of $G'$, given by sending $H\subseteq E(G)$ to the symmetric difference $H' = H\Delta F \defeq (H\cup F)\setminus (H\cap F)$. We now claim the contribution of $H'$ to the right-hand side equals that of $H$ to the left-hand side. Indeed their boundaries correspond by construction of partial duality, so that the factors in $c,K_i,\Lambda_i^{(\prime)}$ in their contributions are equal. Finally their factors in $\mathbf{b}$ also agree, as
\[
    \big( \prod_{e\in F} b_e \big) \cdot \prod_{e'\in H'} b'_{e'}
    =
    \big( \prod_{e\in F} b_e \big) \cdot \prod_{e'\in H\setminus F} b_{e'} \cdot \prod_{e'\in F\setminus H} 1/b_{e'}
    =
    \prod_{e\in H} b_e
\]
\end{proof}

% \newpage 

\section{Thistlethwaite theorems}\label{sec:Thistlethwaite}

\begin{definition}\label{def:thistlethwaite}
Let $K$ be a twisted or planar knotoid, and let $s$ be a state in the oriented state expansion of $K$. Then $G^s_K$ is defined to be the (punctured) marked ribbon graph constructed as follows:
\begin{itemize}
    \item The vertices of $G^s_K$ are given by the state components of $s$ by gluing a disk into each component, seeing the arc component as a marked vertex.
    \item The edges of $G^s_K$ correspond to the classical crossings of $K$. At each smoothing site in $s$ we place a small planar ribbon connecting the opposite arcs of that smoothing.
    \item Each edge is signed by `$+$' or `$-$' depending on whether the smoothing site at that edge incurs a factor $A$ or $B$ respectively in the oriented state expansion.
    \item Two arrow decorations are placed on the boundary of each edge in $G^s_K$ as follows: given an edge at a smoothing site in $s$, if this local smoothing respects the orientation of $K$ then we place an arrow on each arc of the edge that doesn't intersect a vertex. We place these arrows following the counter-clockwise orientation of the plane. Otherwise, if it is a disoriented smoothing, we place two arrows in the same way, but on the arcs of the edge that do intersect vertices of $G^s_K$.
    \item Bars in $K$ are left where they are to produce bar decorations in $G^s_K$, and similarly for the puncture in the planar case.
\end{itemize}
\end{definition}

See Figure \ref{fig:example_state} for an example of a twisted knotoid $K$, a state $s$, and the associated marked ribbon graph $G_K^s$.

\begin{figure}[ht]
    \centering
    \includegraphics[width=.65\linewidth]{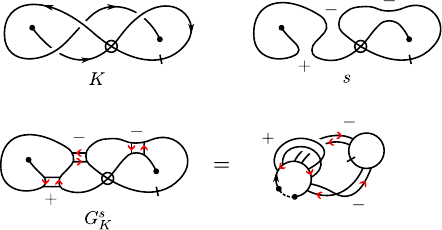}
    \caption{A marked ribbon graph $G_K^s$ associated to a state $s$ of a twisted knotoid $K$.}
    \label{fig:example_state}
\end{figure}

\begin{proposition}\label{prop:recovery}
Let $K$ be a twisted or planar knotoid and let $s$ be a state of $K$. Then a diagram for $K$ can be recovered from $G^s_K$.
\end{proposition}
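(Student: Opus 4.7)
The plan is to invert the construction of Definition \ref{def:thistlethwaite} explicitly, producing from $G^s_K$ a diagram on $S^2$ (with puncture in the planar case) that we then verify is a diagram for $K$. Since each edge of $G^s_K$ was placed as a small planar ribbon at a crossing site of $K$, and each vertex-disc was obtained by gluing a disc into a component of $s$, one first retracts every vertex-disc radially to its boundary and collapses each edge-ribbon to a single transverse arc. The unmarked vertices retract to oriented circles, while the marked vertex retracts to an arc whose endpoints are the two ends of the marking, the linearization giving the tail-to-head direction. Bars and the puncture (in the planar case) are carried along unchanged. The resulting picture in $S^2$ is precisely the state $s$ as it was drawn during the oriented state expansion of $K$.

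At each edge $e$ of $G^s_K$, I would then replace the local pair of smoothing arcs of $s$ by a classical crossing. The sign of $e$ records whether the original smoothing at that site was of type $A$ or $B$; together with the counter-clockwise orientation of $S^2$ used throughout the arrow polynomial expansion, this pins down a unique crossing: by the Kauffman convention, the $A$-smoothing is obtained by rotating the over-strand counter-clockwise into the under-strand, so fixing the smoothing type together with the planar embedding of the edge determines which strand is the over-strand. Performing this local replacement at every edge of $G^s_K$ recovers all the classical crossings of $K$ in their correct positions and with their correct crossing information.

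It remains to check that the orientation and arrow data are consistent with those of $K$. The marking on the marked vertex identifies the tail and head of the reconstructed diagram, and by Lemma \ref{lm:even_arrows} the orientation propagates uniquely along strands between crossings. By Figure \ref{fig:expansion}, the arrows placed on the boundary of an edge of $G^s_K$ occur precisely at disoriented smoothings, and their placement on vertex-adjacent or vertex-disjoint arcs records whether the smoothing was oriented or disoriented; reading these local rules in reverse shows that the reconstructed diagram agrees with $K$ on orientation at every crossing. In the twisted case the bars lie on strands of $K$ away from crossings, so they are carried unchanged from $G^s_K$ back to the diagram; in the planar case the same is true of the puncture.

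The main obstacle, I expect, is not conceptual but bookkeeping: one must simultaneously verify that the sign-determined crossing is unique given the planar embedding of the edge, that the cyclic (resp.~linear) order of half-edges at each vertex matches the order in which strands of $s$ pass through the corresponding state component, and that bars, punctures, and arrow decorations land on arcs rather than at crossing sites. Each of these verifications is a local case check, performed by reading each bullet of Definition \ref{def:thistlethwaite} in reverse, so the proof ultimately reduces to a careful enumeration of the local models.
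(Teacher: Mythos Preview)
Your overall approach---invert Definition \ref{def:thistlethwaite} locally by replacing each edge with the crossing it encodes---is the same as the paper's. For the planar case your outline is essentially complete. In the twisted case, however, there is a genuine gap that the paper handles and your proposal does not.

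You assert that retracting vertex-discs and collapsing edge-ribbons yields ``the state $s$ as it was drawn during the oriented state expansion of $K$'' in $S^2$. But $G_K^s$ is given only as an abstract decorated marked ribbon graph, and for a twisted (or virtual) knotoid it will not in general embed in $S^2$: the state components of $s$ may have virtual crossings with one another, so the vertex-discs cannot be realized as disjoint discs in the plane. The paper deals with this by starting from an arbitrary ribbon graph \emph{diagram} for $G_K^s$, contracting each edge until the arrows on its boundary run counter-clockwise (thus using the arrow direction to fix the local planar orientation of that edge), and marking as \emph{virtual crossings} any self-intersections of the vertex boundaries that arise during the contraction. Your proposal mentions neither the use of arrow direction to orient the edges in the plane nor the introduction of virtual crossings, so as written it does not actually produce a twisted knotoid diagram on $S^2$; this step is not among the bookkeeping items you list at the end.

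This also bears on your crossing-determination step. You invoke ``the counter-clockwise orientation of $S^2$'' together with the edge sign to recover the over/under data via the Kauffman convention, reserving the arrows only for checking strand orientation afterwards. For the abstract ribbon graph there is no ambient $S^2$-orientation available; it is precisely the direction of the arrows (placed counter-clockwise in Definition \ref{def:thistlethwaite}) that encodes the local orientation needed to make the $A$/$B$ convention unambiguous at each edge. The paper accordingly uses both the sign \emph{and} the arrow information to determine each crossing. Finally, the paper's verification is cleaner than a local case enumeration: it observes that the procedure reconstructs the crossing--region incidence data of $K$, which determines the knotoid diagram on its surface via the thickening and capping of Definition \ref{def:thickening}.
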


\begin{proof}
Starting with a diagram for $G_K^s$, contract its edges along their length until they are rectangular and the arrows on them run along their boundaries in counter-clockwise direction, as in the bottom-left-hand side of Figure \ref{fig:example_state}. When these contractions create self-intersections of the diagram's vertices, mark these as virtual crossings. Now replace each contracted edge with a positive or negative crossing connecting the vertex arcs at that edge. The sign of each crossing is determined by the edge's sign and arrow information, by reversing the rules for placing these signs and arrows given in Definition \ref{def:thistlethwaite}. The resulting vertex arcs and crossings form a twisted knotoid diagram after splitting the marking on the marked vertex into two endpoints, which is a diagram for $K$.

To see that this procedure indeed correctly reconstructs $K$, independently of the chosen way of contracting edges, we note that it correctly reconstructs the crossing-region incidence information of $K$,  by construction. This information uniquely defines $K$, since it uniquely defines a knotoid diagram on a compact surface by capping off its boundary regions; recall Figure \ref{fig:thickening}.
\end{proof}

\begin{remark}
The proof of Proposition \ref{prop:recovery} essentially shows that the construction of Definition \ref{def:thistlethwaite} can be reversed. Consequently, for any signed arrow ribbon graph $G$ with appropriate arrow structure there exists a twisted knotoid diagram $K$ with a state $s$ such that $G=G_K^s$. Here the arrow structure is `appropriate' if all the arrows lie on the edges and each edge has exactly two arrows running along its boundary in the same direction 
% \cj{'same direction' here means that the edge disc is oriented, and both arrows follow the induced orientation on the boundary of the edge, yes? } \cw{That's correct.} 
and on opposite edge-arcs.
\end{remark}

\begin{proposition}\label{prop:state_duals}
Let $s,s'$ be states of a twisted or planar knotoid $K$. Then $G^s_K$ and $G^{s'}_K$ are partial duals of one another. Conversely any partial dual of $G^s_K$ is equal to $G^{s''}_K$ for some state $s''$ of $K$.
\end{proposition}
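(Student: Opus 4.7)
The plan is to reduce the proposition to a local statement: if two states $s$ and $s'$ of $K$ differ at precisely one crossing $c$, then $G^{s'}_K = (G^s_K)^{e_c}$, where $e_c$ is the edge associated to $c$. Granting this, an arbitrary pair of states is connected by a sequence of single-crossing smoothing swaps, and iterating the local statement yields $G^{s'}_K = (G^s_K)^D$ where $D$ is the set of edges at which the smoothings of $s$ and $s'$ disagree. The converse direction then follows from self-invertibility of partial duality, since for any $D\subseteq E(G^s_K)$ one can swap the smoothings of $s$ at the crossings corresponding to $D$ to produce a state $s''$ with $G^{s''}_K = (G^s_K)^D$.

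For the local statement, I would work in a small disc neighborhood of the crossing $c$. Swapping the smoothing at $c$ interchanges the local role of the two pairs of strand arcs at the smoothing site: what were two state-arcs joined via the bridge $e_c$ become separated, while the two arcs that previously bounded the bridge become joined into new state-arcs attaching to $e_c$. At the level of the ribbon graph, this is exactly the exchange of attaching and non-attaching boundary arcs of $e_c$ that defines partial duality with respect to $e_c$. Global connectivity and boundary structure of the graph then update correctly: if $c$ joined two distinct state components in $s$, the corresponding component splits in $s'$, and vice versa, in agreement with the effect of partial duality on the vertex structure.

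Next I would verify that the decorations transform consistently. The arrow placement rule of Definition \ref{def:thistlethwaite} places arrows on the non-attaching arcs of $e_c$ exactly when the local smoothing is oriented, and on the attaching arcs when it is disoriented; in both cases the arrows run counter-clockwise around the crossing site. Swapping the smoothing toggles oriented versus disoriented, and thus toggles which pair of arcs carries the arrows, matching the action of partial duality on arrow decorations as depicted in Figure \ref{fig:partial_dual}. Bars and the puncture, being located away from $c$, are preserved verbatim. The marking of the marked vertex likewise ends up in the correct position, since in all cases the arc component of the new state still has the original tail and head of $K$ as its endpoints.

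The main obstacle is a careful case analysis verifying that the arrow placement in Definition \ref{def:thistlethwaite} intertwines with the definition of partial duality. One must treat the oriented and disoriented configurations separately, and additionally account for the sign label on $e_c$, which flips from $+$ to $-$ or vice versa under a smoothing swap: this sign flip must be absorbed into the conventions for partial duality on signed marked ribbon graphs adopted here, or else tracked as an auxiliary decoration compatible with the ribbon graph structure. The subcase where $c$ involves the arc component of the state (so that the marking may move between vertices in the partial dual) warrants extra care but is handled by the same local analysis.
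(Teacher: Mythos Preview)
Your proposal is correct and follows essentially the same approach as the paper: identify the set of crossings where $s$ and $s'$ differ and observe that partial duality at the corresponding edges transforms $G^s_K$ into $G^{s'}_K$ (the paper asserts this for all differing crossings at once rather than reducing to a single-crossing swap, but the content is the same, and your local verification of the arrow, bar, puncture, and marking data is more careful than the paper's one-line proof). On your concern about the edge sign flipping under a smoothing swap: the paper's definition of partial duality does not build in a sign flip, and this is intentional---the sign discrepancy is instead absorbed into the weight inversion $b_e\mapsto 1/b_e$ of Proposition~\ref{prop:BR_duals}, which for $b_e\in\{B/A,\,A/B\}$ is exactly a sign toggle, so no extra convention is needed.
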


\begin{proof}
Let $C$ be the set of all crossings of $K$ at which $s$ differs from $s'$, and let $E(C)$ be the set of edges in $G_K^s$ corresponding to the crossings in $C$. Then $\left( G_K^s \right)^{E(C)} = G_K^{s'}$. Conversely, let $H=\left( G_K^s \right)^{E'}$ be a partial dual of $G^s_K$, where $E'\subseteq E(G^s_K)$. Then $H=G_K^{s''}$ where $s''$ is the state differing from $s$ precisely at the crossings corresponding to edges in $E'$.
\end{proof}

\begin{theorem}\label{thm:thistlethwaite}
Let $K$ be a virtual knotoid diagram and let $G_K^s$ be the marked ribbon graph decorated with arrows corresponding to a state $s$ of $K$. Then the arrow bracket of $K$ can be obtained from the arrow Bollob\'as-Riordan polynomial of $G_K^s$ as follows.
\[
    \langle K\rangle = \frac{A^{e_+}B^{e_-}}{d} R_{G_K^s}(1,\mathbf{b},d).
\]
Here $e_+$ and $e_-$ are the numbers of positive and negative edges in $s$, respectively, and the weights vector $\mathbf{b}$ is given by
\[
b_e = 
\begin{cases}
  B/A & \text{if $e$ is positive},\\
  A/B & \text{if $e$ is negative}.
\end{cases}
\]
Consequently, we have the following relation for the arrow polynomial.
\[
    \langle K\rangle_A = \frac{A^{e_+-e_-}}{-A^2-A^{-2}} R_{G_K^s}(1,\mathbf{b}\vert_{B=A^{-1}},-A^2-A^{-2}).
\]
\end{theorem}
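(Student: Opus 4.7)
The plan is to prove the formula by expanding both sides of the equation and matching contributions term by term. Applying the oriented state expansion of Figure \ref{fig:expansion} at every classical crossing of $K$, the arrow bracket decomposes as
\[
    \langle K\rangle = \sum_{s'} A^{\sigma^+(s')} B^{\sigma^-(s')} d^{|s'|-1} [s'],
\]
summed over all states $s'$ of $K$, where $\sigma^{\pm}(s')$ are the numbers of $A$- and $B$-smoothings in $s'$, $|s'|$ is the number of state components, and $[s']$ denotes the product of reduced-state variables $K_i$ and $\Lambda_j^{(\prime)}$ associated to the components of $s'$ (using that $\langle K_i\rangle = d\,K_i$ and distributing multiplicatively over components; by Lemma \ref{lm:even_arrows}, every loop has an even number of arrows so $K_{i(f)/2}$ makes sense). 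On the graph side, $R_{G_K^s}(1,\mathbf{b},d)$ is by definition a sum over spanning subgraphs $F\subseteq E(G_K^s)$. The bridge between the two sums is the bijection $F \leftrightarrow s'(F)$, where $s'(F)$ is the state of $K$ obtained from $s$ by switching the smoothing at every crossing corresponding to an edge of $F$. Proposition \ref{prop:state_duals} furnishes this bijection and, crucially, the identity $G_K^{s'(F)} = (G_K^s)^F$ which governs the comparison of decorations.

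Matching contributions under this bijection reduces to three checks. First, letting $e_+^F$ and $e_-^F$ denote the numbers of positive and negative edges in $F$, a direct count gives $\sigma^+(s'(F)) = e_+ - e_+^F + e_-^F$ and $\sigma^-(s'(F)) = e_- + e_+^F - e_-^F$, so with the stated weights $b_e$ one has
\[
    A^{\sigma^+(s'(F))} B^{\sigma^-(s'(F))} = A^{e_+} B^{e_-} \prod_{e\in F} b_e.
\]
Second, the factor $a^{k(F)}$ in $R_{G_K^s}(a,\mathbf{b},c)$ trivializes at $a=1$, which is precisely why the evaluation is taken there. Third, and most critically, one must verify both $|s'(F)| = \text{bc}(F)$ and that the matching of state components with boundary components is compatible with arrow decorations, so that $[s'(F)] = \prod_{f\in \partial^c(F)} K_{i(f)/2} \cdot \Lambda_i^{(\prime)}$.

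The main obstacle is this third check. The underlying geometric fact is that the state components of $s'(F)$, viewed as planar curves, coincide with the boundary curves of the spanning sub-ribbon-graph of $G_K^s$ with edge set $F$: at each crossing, either the smoothing agrees with $s$ (the edge is not in $F$ and the boundary travels along the edge's long sides) or it differs (the edge is in $F$ and the boundary travels across the attaching arcs), and in both cases the resulting boundary arc matches the local smoothing of $s'(F)$. Under this identification one must then check case by case -- four cases, indexed by whether the edge is positive/negative and in/out of $F$ -- that the arrows on boundary components are precisely those produced by the Definition \ref{def:thistlethwaite} rule for $s'(F)$. The counter-clockwise convention for arrow placement is exactly what makes this identification hold locally, and the check becomes routine once the conventions are aligned with the local picture of partial duality from Figure \ref{fig:partial_dual}.

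Assembling the matched contributions gives
\[
    \sum_{s'} A^{\sigma^+(s')} B^{\sigma^-(s')} d^{|s'|-1} [s'] = \frac{A^{e_+} B^{e_-}}{d} \sum_F \Big(\prod_{e\in F} b_e\Big) d^{\text{bc}(F)} \prod_{f\in \partial^c(F)} K_{i(f)/2} \cdot \Lambda_i^{(\prime)} = \frac{A^{e_+} B^{e_-}}{d} R_{G_K^s}(1,\mathbf{b},d),
\]
which is the claimed formula for $\langle K\rangle$. The arrow polynomial version follows immediately by substituting $B = A^{-1}$ and $d = -A^2 - A^{-2}$. An alternative approach via induction on the number of crossings, using Proposition \ref{prop:del_cont} together with a deletion-contraction style identity for $\langle K\rangle$ at each crossing, is also available, but the direct state-by-state matching above is cleaner and makes the role of partial duality transparent.
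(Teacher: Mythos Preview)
Your proof is correct and follows essentially the same approach as the paper: set up the bijection between states $s'$ of $K$ and spanning subgraphs $F\subseteq E(G_K^s)$ via ``switch the smoothings indexed by $F$,'' then match contributions term by term by checking (i) the $A,B$ exponents, (ii) the boundary-component count, and (iii) the arrow data on each boundary component. One small remark: your four-case arrow check should be indexed by whether the smoothing at the edge is \emph{oriented or disoriented} (this is what governs arrow placement in Definition~\ref{def:thistlethwaite}) rather than by the edge sign $\pm$, and the paper handles step~(iii) directly from the construction of $G_K^s$ rather than routing through the partial-dual identity $G_K^{s'(F)}=(G_K^s)^F$; but these are cosmetic differences and your argument goes through.
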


\begin{proof}
Note that the right-hand side is independent of $s$ due to Propositions \ref{prop:BR_duals} and \ref{prop:state_duals}. The second identity follows immediately from the first by substituting $B=A^{-1}$ and $d=(-A^2-A^{-2})$. To show the first identity, consider the one-to-one correspondence between states of $K$ and of $G_K^s$ given by associating to a state $s'$ of $K$ the subgraph with edge set $F(s')\subseteq E(G_K^s)$ containing those edges where $s'$ differs from $s$. We show that the contribution of $s'$ to $\langle K\rangle$ is equal to contribution of $F(e')$ to the right-hand side. 

The boundary components of $F(s')$ correspond exactly to the state components of $s'$ by construction of $G_K^s$. Indeed, recall that the edges in $G_K^s$ attach at the smoothing sites of $s$. If an edge is not in $F(s')$, then the smoothing site in $s$ associated to that edge is smoothed the same in $s'$, and removing the edge locally yields the correct boundary in $F(s')$. If an edge is in $F(s')$ then the associated smoothing site is smoothed oppositely in $s'$, and including the edge locally corrects the boundary to follow the smoothing in $s'$. From this it follows that both contributions have equal factors of $d$, $K_i$, and $\Lambda^{(\prime)}_i$.

So to see that the contributions are equal it suffices to show they have the same powers of $A$ and $B$. The powers of $A$ and $B$ in the contribution of $s'$ are just the numbers of $A$-smoothings and $B$-smoothings in $s'$; call these numbers $p_a(s')$ and $p_b(s')$ respectively. The power of $A$ in the contribution from $F(s')$ is
\[
    e_+(G_K^s) - e_+(F(s')) + e_-(F(s')) = e_+(G_K^s \setminus F(s')) + e_-(F(s')) = p_a(s').
\]
Here the final equality follows from the definition of $F(s')$. Similarly the power of $B$ in the contribution from $F(s')$ is
\[
    e_-(G_K^s) - e_-(F(s')) + e_+(F(s')) = e_-(G_K^s \setminus F( s')) + e_+(F(s')) = p_b(s').
\]
\end{proof}

\begin{remark}\label{rk:thistlethwaite}
Theorem \ref{thm:thistlethwaite} is a generalization of Thistlethwaite's theorem \cite{thistlethwaite1987}, which can be phrased as asserting that the Kauffman bracket polynomial of a link diagram can be recovered from the Bollob\'{a}s-Riordan polynomial of an associated ribbon graph. Recall that the oriented state expansion from Figure \ref{fig:expansion} is a generalization of the Kauffman skein relation, and that the Kauffman bracket may be obtained from the arrow bracket by omitting all arrows and orientations. Hence as a corollary to Theorem \ref{thm:thistlethwaite} we also obtain a generalization of Thistlethwaite's theorem for the Kauffman bracket of knotoids \cite{turaev2012}, by dropping all arrows and orientations.
\end{remark}

Similarly Thistlethwaite's theorem extends to the twisted arrow- and loop arrow polynomials:

\begin{theorem}\label{thm:thistlethwaite_t}
Let $K$ be a twisted knotoid diagram and let $G_K^s$ be the marked ribbon graph decorated with bars and arrows corresponding to a state $s$ of $K$. Then
\[
    \langle K\rangle^t = \frac{A^{e_+}B^{e_-}}{d} R^t_{G_K^s}(1,\mathbf{b},d).
\]
where $e_\pm$ and $\mathbf{b}$ are as before. Consequently
\[
    \langle K\rangle^t_A = \frac{A^{e_+-e_-}}{-A^2-A^{-2}} R^t_{G_K^s}(1,\mathbf{b}\vert_{B=A^{-1}},-A^2-A^{-2}).
\]
\end{theorem}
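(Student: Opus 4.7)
The plan is to mirror the proof of Theorem \ref{thm:thistlethwaite} essentially verbatim, with the additional bookkeeping required to track bar decorations alongside arrow decorations. As in the virtual case, the second identity follows immediately from the first by substituting $B = A^{-1}$ and $d = -A^2 - A^{-2}$, so I would focus on establishing the first identity.

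First, I would verify that the right-hand side is independent of the choice of state $s$. By Proposition \ref{prop:state_duals} any two marked ribbon graphs $G_K^s$ and $G_K^{s'}$ are partial duals, so it suffices to check that the analogue of Proposition \ref{prop:BR_duals} holds for $R^t$. The proof is identical: the bijection $H \mapsto H \triangle F$ between spanning subgraphs respects boundary components, and bar decorations lie on vertex boundaries away from attaching arcs, so they are unaffected by the partial duality operation of Figure \ref{fig:partial_dual} and the $K_{i/2}$, $K_{1/2}$, and $\Lambda_i$ factors transfer correctly.

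The main computation then uses the bijection $s' \mapsto F(s')$, where $F(s') \subseteq E(G_K^s)$ is the set of edges at which $s'$ differs from $s$, exactly as in Theorem \ref{thm:thistlethwaite}. The matching of the powers $A^{p_a(s')} B^{p_b(s')}$ through the prefactor $A^{e_+} B^{e_-}$ and the weights $\mathbf{b}$ is unchanged from the virtual case, as is the correspondence between boundary components of $F(s')$ in $G_K^s$ and state components of $s'$ in the expansion of $K$, which yields equal factors of $d^{\mathrm{bc}} / d$ and equal contributions from the arrow-decorated variables $K_{i/2}$ and $\Lambda_i$. The new step is to track the bars: by Definition \ref{def:thistlethwaite} every bar on an arc of $K$ is placed directly on the corresponding vertex boundary of $G_K^s$, and under the local smoothings it lands on a state component of $s'$ at precisely the same position along the matching boundary component of $F(s')$. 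The reduction rules of Figure \ref{fig:twisted_reduction} together with Lemma \ref{lm:bar_passing} are defined on abstract state components, so they apply identically on both sides and the fully reduced forms agree, including the loop-with-a-single-bar variable $K_{1/2}$.

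The main obstacle I anticipate is the verification that bars transfer correctly under the state-to-subgraph correspondence near each local configuration, namely at classical crossings (where the smoothing may route an adjacent arc-with-bar into either one of two state components), at bar sites themselves, and where arrows and bars interact via Lemma \ref{lm:bar_passing}. However, this amounts to a finite case check at each local site and introduces no genuine new difficulty beyond the virtual case. Consequently I expect the proof to have essentially the same structure as that of Theorem \ref{thm:thistlethwaite}, with ``arrow decorations'' replaced throughout by ``arrow and bar decorations'' and with the reduction rules invoked for the enlarged alphabet $\{K_i, K_{1/2}, \Lambda_i\}$.
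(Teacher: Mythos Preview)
Your proposal is correct and matches the paper's approach exactly: the paper's entire proof is the single sentence ``Analogous to the proof of Theorem \ref{thm:thistlethwaite},'' and what you have written is precisely a careful expansion of that analogy, including the additional bookkeeping for bars. If anything, you have supplied more detail than the paper does.
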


\begin{proof}
Analogous to the proof of Theorem \ref{thm:thistlethwaite}.
\end{proof}

\begin{theorem}\label{thm:thistlethwaite_l}
Let $K$ be a planar knotoid diagram, and $G_K^s$ the punctured marked ribbon graph decorated with arrows corresponding to a state $s$ of $K$. Then
\[
    \langle K\rangle^\ell = \frac{A^{e_+}B^{e_-}}{d} R^\ell_{G_K^s}(1,\mathbf{b},d),
\]
where $e_\pm$ and $\mathbf{b}$ are as before. Consequently
\[
    \langle K\rangle^\ell_A = \frac{A^{e_+-e_-}}{-A^2-A^{-2}} R^\ell_{G_K^s}(1,\mathbf{b}\vert_{B=A^{-1}},-A^2-A^{-2}).
\]
\end{theorem}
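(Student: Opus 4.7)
The plan is to mirror the proof of Theorem \ref{thm:thistlethwaite} for the virtual case, with additional bookkeeping for the puncture data specific to planar knotoids. First, the second identity follows immediately from the first by the substitution $B=A^{-1}$, $d=(-A^2-A^{-2})$, so it suffices to prove the first. I would also note that the left-hand side $\langle K\rangle^\ell$ depends only on $K$, so independence of the right-hand side from the choice of $s$ is automatic once the equality is established for a single state; nonetheless, the cleanest structural justification is via the loop analogue of Proposition \ref{prop:BR_duals} combined with Proposition \ref{prop:state_duals}. The proof of Proposition \ref{prop:BR_duals} at $a=1$ carries over verbatim to $R^\ell$, because the symmetric-difference bijection between states of $G$ and $G^F$ preserves boundary components together with their nesting relative to the puncture, hence preserves both $\mathrm{bc}(\cdot)-\ell$ and the $(\Lambda_i^{(\prime)})^\ell$ factor.

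Next I would set up the same bijection as in Theorem \ref{thm:thistlethwaite}: states $s'$ of $K$ correspond bijectively to spanning subgraphs $F(s')\subseteq E(G_K^s)$, where $F(s')$ collects those edges at which $s'$ smooths oppositely to $s$. Exactly as in the virtual case, the factor $A^{e_+}B^{e_-}/d$ combined with the edge-weights $b_e$ over $F(s')$ reproduces $A^{p_a(s')}B^{p_b(s')}$, where $p_a(s')$ and $p_b(s')$ count the $A$- and $B$-smoothings of $s'$. Thus the powers of $A$ and $B$ match verbatim.

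The new content is to match the remaining factor $d^{|s'|-1}\langle s'\rangle^\ell$ on the left with $d^{\mathrm{bc}(F(s'))-1-\ell}\,(\Lambda_i^{(\prime)})^\ell$ on the right. Here I would invoke the construction of $G_K^s$ (Definition \ref{def:thistlethwaite}) to identify boundary components of $F(s')$ with state components of $s'$, extending the argument already used in Theorem \ref{thm:thistlethwaite}. Since the puncture of $K$ is carried through Definition \ref{def:thistlethwaite} to a distinguished point of $G_K^s$, the planar embedding of $G_K^s$ inherits its nesting structure from the diagram of $K$. Consequently a boundary component of $F(s')$ encircles the marked vertex but not the puncture if and only if the corresponding loop component of $s'$ encircles the arc component but not the puncture. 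This identifies the exponent $\ell$ and the factor $(\Lambda_i^{(\prime)})^\ell$ on both sides, and each of the remaining $|s'|-1-\ell = \mathrm{bc}(F(s'))-1-\ell$ boundary components contributes one factor of $d$ on each side.

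The main obstacle will be making precise the nesting correspondence under state changes, equivalently under partial duality. Intuitively the nesting is inherited from the plane, but care is needed to verify that replacing $s$ by any other state $s''$ preserves both the puncture-face incidence in $G_K^{s''}$ and the identification of ``loops encircling the arc'' in $s''$ with ``boundary components enclosing the marked vertex but not the puncture'' in $F(s')\,\triangle\,F(s'')$. Once the loop analogue of Proposition \ref{prop:BR_duals} is established (which is the heart of this delicate point), the matching of contributions collapses to the argument of Theorem \ref{thm:thistlethwaite} applied term-by-term.
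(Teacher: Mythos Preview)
Your proposal is correct and follows essentially the same route as the paper: the paper's proof is a single sentence stating that the argument is analogous to that of Theorem \ref{thm:thistlethwaite}, with the additional remark that the puncture data in each state $s'$ is correctly retrieved from $F(s')$ by construction of $G_K^s$. Your write-up spells this out in considerably more detail than the paper does, including the explicit matching of the $d^{\lvert s'\rvert-1-\ell}(\Lambda_i^{(\prime)})^\ell$ factor and a discussion of why the nesting data survives partial duality, but the underlying strategy is identical.
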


\begin{proof}
This proof is again analogous to the proof of Theorem \ref{thm:thistlethwaite}, where we take care to note that the puncture data in each state $s'$ of $K$ is correctly retrieved from $F(s')$, again by construction of $G_K^s$.
\end{proof}

\begin{corollary}
Similarly to Remark \ref{rk:thistlethwaite}, Theorems \ref{thm:thistlethwaite_t} and \ref{thm:thistlethwaite_l} imply generalizations of Thistlethwaite's theorem for the Kauffman bracket of twisted knotoids and the Turaev loop bracket \cite{turaev2012} of planar knotoids respectively.
\end{corollary}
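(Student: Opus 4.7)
The plan is to proceed exactly as in Remark \ref{rk:thistlethwaite}, which handled the analogous specialization of Theorem \ref{thm:thistlethwaite} to the Kauffman bracket of virtual knotoids. For each of the two brackets I would first identify an evaluation of the corresponding arrow bracket that recovers it. For the Kauffman bracket of twisted knotoids, forgetting arrows and bars in the twisted oriented state expansion collapses it to the ordinary Kauffman state sum, and this forgetful operation corresponds on $\langle K\rangle^t$ to setting $K_i \mapsto 1$, $K_{1/2}\mapsto 1$, and $\Lambda_i \mapsto 1$. For the Turaev loop bracket of planar knotoids, one forgets arrow-decorations but retains the count $\ell$ of loops separating the arc component from the puncture; this corresponds on $\langle K\rangle^\ell$ to a substitution of the form $(\Lambda_i^{(\prime)})^\ell \mapsto \mu^\ell$ for the variable $\mu$ that the Turaev loop bracket uses to track such encircling loops.

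Applying these substitutions to both sides of Theorems \ref{thm:thistlethwaite_t} and \ref{thm:thistlethwaite_l}, the left-hand side becomes the Kauffman or Turaev loop bracket by construction. On the right-hand side, the same substitutions applied to $R^t_{G_K^s}(1,\mathbf{b},d)$ suppress the arrow- and bar-tracking factors $\prod K_{i(f)/2}$ and $\Lambda_i$, leaving the classical multivariate Bollob\'{a}s-Riordan polynomial $Z_{\bar{G}_K^s}(1,\mathbf{b},d)$ of the ribbon graph $\bar{G}_K^s$ underlying $G_K^s$, with the vertex marking forgotten. In the planar case, the analogous substitution in $R^\ell_{G_K^s}$ retains the $\mu^\ell$ tracking factor, yielding a loop-tracking enhancement of the classical Bollob\'{a}s-Riordan polynomial of a punctured ribbon graph. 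Composing these two steps writes the Kauffman bracket (resp.\ Turaev loop bracket) of $K$ as an explicit evaluation of a Bollob\'{a}s-Riordan type polynomial of $\bar{G}_K^s$, which is the desired Thistlethwaite-type statement.

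The only real work lies in checking that the substitutions on the two sides are compatible — that is, that the boundary-component contributions of a spanning subgraph $F\subseteq E(G_K^s)$ under the substitution indeed reduce to the plain factor $d^{\mathrm{bc}(F)}$ in the twisted case, and to $d^{\mathrm{bc}(F)-\ell}\mu^\ell$ in the planar case. This is a direct bookkeeping check from the definitions of $R^t_G$ and $R^\ell_G$, so I do not expect any obstacle beyond matching notation with Turaev's original conventions for the loop bracket.
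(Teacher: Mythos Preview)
Your approach is essentially the one the paper has in mind: the corollary is stated without proof, relying entirely on the analogy with Remark~\ref{rk:thistlethwaite}, whose content is precisely ``drop all arrows and orientations and observe that the oriented state expansion collapses to the Kauffman skein relation.'' Your elaboration via explicit substitutions on both sides of Theorems~\ref{thm:thistlethwaite_t} and~\ref{thm:thistlethwaite_l} is the natural way to make this precise, and the planar case (sending $(\Lambda_i^{(\prime)})^\ell$ to a loop-counting variable while killing the arrow index) is exactly right for recovering Turaev's loop bracket.

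One point to tighten in the twisted case: you propose to forget both arrows \emph{and bars}, sending $K_{1/2}\mapsto 1$. But the Kauffman bracket of a twisted knotoid, in the sense of Bourgoin~\cite{bourgoin2008}, retains the distinction between state loops with an even versus odd number of bars; the variable $K_{1/2}$ (a loop with a single bar) survives the passage from the arrow bracket to the Kauffman bracket. Following Remark~\ref{rk:thistlethwaite} literally, one drops only arrows and orientations, so the correct specialization is $K_i\mapsto 1$ for $i\in\mathbb{Z}_{\geq 0}$ and $\Lambda_i\mapsto 1$, while leaving $K_{1/2}$ as a free variable. With that adjustment your bookkeeping check goes through unchanged.
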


\begin{example}
As an example we will verify the Thistlethwaite theorems for some small diagrams. In this example we will focus on the arrow polynomials after evaluating $B=A^{-1}$, $d=(-A^2-A^{-2})$ but will use the shorthand notation $d = -A^2-A^{-2}$. First, let $K_1$ be the nontrivial spherical knotoid with two negative crossings; see Figure \ref{fig:theorem_ex1}. Then it is easy to check that
\[
    \langle K_1 \rangle_A = A^{-2} + (1-A^4)\Lambda_1.
\]

\begin{figure}[ht]
    \centering
    \includegraphics[width=.75\linewidth]{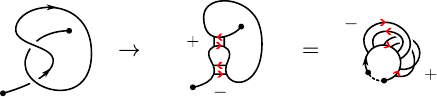}
    \caption{Associating a ribbon graph to a state of $K_1$.}
    \label{fig:theorem_ex1}
\end{figure}

Figure \ref{fig:theorem_ex1} also depicts a marked ribbon graph $G_{K_1}^s$ associated to a state $s$ of $K_1$. In fact this ribbon graph is exactly the graph $G$ from Example \ref{ex:graph_poly}, with a negative label for $e_1$ and a positive label for $e_2$ so that $b_{e_1}=A^2$, $b_{e_2}=A^{-2}$. So from Example \ref{ex:graph_poly} we find
\[
    R_{G^s_{K_1}}(1,\mathbf{b},d) = dA^{-2} + (2d+A^2d^2)\Lambda_1.
\]
Hence using that $e_+(s)=e_-(s)=1$ we compute
\[
    \frac{A^{e_+-e_-}}{d} R_{G_K^s}(1,\mathbf{b},d) = \frac{R_{G_K^s}(1,\mathbf{b},d)}{d} = A^{-2} + (2+A^2d)\Lambda_1 = \langle K_1 \rangle_A,
\]
as expected.

Next let $K_2$ be the twisted knotoid from Example \ref{ex:twisted}, for which we have seen that
\[
    \langle K_2\rangle_A^t = \left(A^2 + 1 + A^{-2}\right) + d\Lambda_1 K_1.
\]
The marked ribbon graph $G_{K_2}^s$ corresponding to the state $s$ with only positive smoothings is given in Figure \ref{fig:theorem_ex2}.

\begin{figure}[ht]
    \centering
    \includegraphics[width=.75\linewidth]{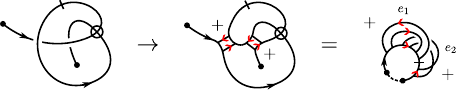}
    \caption{Associating a ribbon graph to a state of $K_2$.}
    \label{fig:theorem_ex2}
\end{figure}

To compute $R_{G_{K_2}^s}^t(1,\mathbf{b},d)$ we find the contributions of the four subsets $F\subseteq \{e_1,e_2\}$, where $e_1,e_2$ are the edges of $G_{K_1}^s$ as indicated in Figure \ref{fig:theorem_ex2}. The contribution from $F=\emptyset$ is $d$, since this subgraph has no edges, only one boundary component, and all markings on it cancel. The contribution from $\{e_1\}$ is $A^{-2}d^2K_1\Lambda_1$, as there are two boundary components, each with two arrows that do not cancel, and one edge contributing a factor $b_{e_1}=A^{-2}$. Finally the subgraphs for $\{e_2\}$ and $\{e_1,e_2\}$ both have one boundary component without arrows in their reduced form, so that their contributions are $A^{-2}d$ and $A^{-4}d$ respectively. So in total,
\[
    R_{G^s_{K_2}}^t(1,\mathbf{b},d) = d + A^{-2}d^2 K_1 \Lambda_1 + A^{-2}d + A^{-4}d.
\]
Using that $e_+(s)=2$ and $e_-(s)=0$ we therefore find
\begin{align*}
    \frac{A^{e_+-e_-}}{d} R^t_{G_{K_2}^s}(1,\mathbf{b},d) &= \frac{A^2}{d} \left( d + A^{-2}d^2 K_1 \Lambda_1 + A^{-2}d + A^{-4}d \right)\\
    &= A^2 + dK_1\Lambda_1 + 1 + A^{-2}\\
    &= \langle K_2 \rangle_A^t.
\end{align*}

Finally to verify a small example of the planar Thistlethwaite theorem, let $K_3$ be the non-trivial 1-crossing planar knotoid from Figure \ref{fig:1crossing}. It is immediate that
\[
    \langle K_3 \rangle_A^\ell = A + A^{-1}(\Lambda_0)^1.
\]
The ribbon graph $G_{K_3}^s$ corresponding to the state with a negative smoothing is given in Figure \ref{fig:theorem_ex3}.

\begin{figure}[ht]
    \centering
    \includegraphics[width=.65\linewidth]{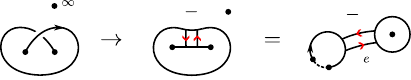}
    \caption{Associating a ribbon graph to a state of $K_3$.}
    \label{fig:theorem_ex3}
\end{figure}

The contribution to $R_{G^s_{K_3}}^\ell$ of $F=\emptyset$ is $d(\Lambda_0)^1$, as there are two boundary components all of whose arrows cancel, and $\ell=1$. The contribution from $F=\{e\}$ is $A^2d$ as there is one boundary component and one negative edge. So in total
\[
    R_{G^s_{K_3}}^\ell(1,\mathbf{b},d) = A^2d + d(\Lambda_0)^1,
\]
and using that $e_+(s)=0$ and $e_-(s)=1$ we find
\[
    \frac{A^{e_+-e_-}}{-A^2-A^{-2}} R^\ell_{G_{K_3}^s}(1,\mathbf{b},d) = \frac{A^{-1}}{d}(A^2d+d(\Lambda_0)^1) = A + A^{-1}(\Lambda_0)^1 = \langle K_3\rangle^\ell_A,
\]
as expected.
\end{example}

% \textcolor{red}{[Placeholder: verification of the various Thistlethwaite theorems for small examples. For each example in section 4, take associated ribbon graph and compute BR polynomial to verify theorems.]}
% \end{example}

% \newpage 

\section{Linkoids}\label{sec:linkoids}

As a final generalization, we briefly discuss the extension of the theory covered so far to the case of `linkoids' \cite{gabrovvsek2023}, which are knotoids with multiple components. For the sake of brevity we restrict out attention in this section to classical diagram on the sphere. As we shall see, in the multi-component case this still gives rise to an interesting extension of the bracket polynomial.

\begin{definition}
\cite{gabrovvsek2023} A \emph{linkoid diagram} on $S^2$ is an immersion of finitely many copies of $I$ and $S^1$ into $S^2$, all whose singularities are crossings with over/under-crossing information. A \emph{linkoid} is an equivalence class of such diagrams, considered up to the same equivalence relation as for knotoids. A component of a linkoid is \emph{knotoidal} if it has endpoints. A linkoid with a single knotoidal component is also called a \emph{multiknotoid} \cite{turaev2012}.
\end{definition}

\begin{figure}[ht]
    \centering
    \includegraphics[width=.8\linewidth]{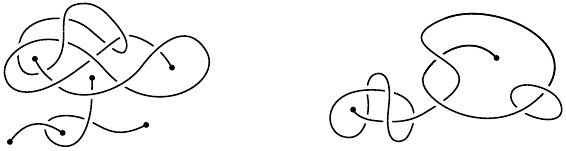}
    \caption{A linkoid (left) and a multiknotoid (right).}
    \label{fig:linkoids}
\end{figure}

We first consider what type of states may arise in the oriented state expansion of a linkoid diagram. In fact the case is quite different from that for knotoids, since several results for knotoid states relied on there being exactly one knotoidal component. In particular we have that Lemmas \ref{lm:even_arrows} and \ref{lm:classical_arrows} do not generalize to linkoids, as is shown by the following counter-example:

\begin{example}
Let $L$ and $S$ be the linkoid and its state depicted in Figure \ref{fig:counterex}. Then we see that the knotoidal components of $S$ each have an odd number of arrows, and that the circular component of $S$ has two arrows on its fully reduced form.
\end{example}

\begin{figure}[ht]
    \centering
    \includegraphics[width=.7\linewidth]{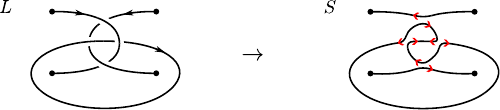}
    \caption{A counterexample to Lemmas \ref{lm:even_arrows} and \ref{lm:classical_arrows} for linkoids.}
    \label{fig:counterex}
\end{figure}

In light of this counterexample, to define an arrow polynomial of linkoids we should reconsider the kinds of reduced states that can arise from the oriented state expansion. Other than noting that the linkoid case allows for marked ribbon graphs with several marked vertices, this comes down to classifying the reduced arc components with an odd number of arrows. Since the orientation of a state component flips at every arrow, we find that the endpoints of such a component in $G^s_L$ must have both of the same type; either both heads or both tails. It is further easy to see that such an arc state is fully reduced if and only if the arrows on it are alternating.

As such, the fully reduced arc components with an odd number of arrows come in two types: those with two head-endpoints and those with two tail-endpoints. We shall denote these reduced components by $\Lambda^h_{i/2}$ and $\Lambda^t_{i/2}$ respectively, where $i$ is the number of arrows on the component. In total, there are then four types of fully reduced arc component: those with integer indices, $\Lambda_i$ and $\Lambda_i'$, and those with half-integer indices, $\Lambda^h_{i/2}$ and $\Lambda^t_{i/2}$.

\begin{remark}
Since we are in the classical setting we do not need to be concerned with the presence of bars, so that $\Lambda_i$ and $\Lambda_i'$ are indeed distinct state components. It is interesting to note, however, that passing a bar over the entirety of a copy of $\Lambda^h_{i/2}$ or $\Lambda^t_{i/2}$ has no effect on these state components, meaning they remain distinct in the twisted setting.
\end{remark}

Next we consider the loop components in a linkoid state. These are much the same as before, except that since we are in the classical case we can attach a sort of puncture information to each loop component. Namely, in the classical case an arc component in a state behaves much like a puncture, as we do not have the move $V0$ (recall Figure \ref{fig:moves}) at our disposal. Seeing our loop component as a Jordan curve with an interior and exterior, it is clear such an arc cannot be moved from interior to exterior or vice versa using Reidemeister moves. Thus we can count the arc components in the interior and exterior of a loop component, and attach this information to the component. Call the results of these counts $n_i$ for the interior and $n_e$ for the exterior. As we have assumed the spherical case, there is no canonical choice for which region is the interior and which the exterior of the loop. To remedy this we simply index a loop component with $\min(n_i,n_e)$, which lies between $0$ and $\lfloor \kappa/2 \rfloor$ where $\kappa$ is the number of knotoidal components.

\begin{definition}\label{def:linkoid_poly}
Let $L$ be a spherical linkoid diagram. The \emph{arrow bracket} of $L$, denoted $\langle L\rangle^m$, is defined to be the polynomial given by carrying out the oriented state expansion on $L$ and summing up the values of the resulting states analogously to Definition \ref{def:arrow}, resulting in a polynomial $\langle K\rangle^m \in \mathbb{Z}[A,B,d, \{K_i^\ell\}_{i\in\mathbb{N}, 0\leq \ell \leq \lfloor \kappa/2 \rfloor}, \{\Lambda_i, \Lambda'_i, \Lambda^h_{i/2}, \Lambda^t_{i/2}\}_{i\in \mathbb{N}} ]$. Here $\kappa$ is the number of knotoidal components in $L$. The \emph{arrow polynomial} $\langle L\rangle^m_A$ of $L$ is obtained from $\langle L\rangle ^m$ by the substitution $B=A^{-1}, d=(-A^2-A^{-2})$.
\end{definition}

\begin{remark}
In keeping track of the number of arc components contained inside each loop component we could go further: one could instead keep track of the number of fully reduced arc components of each type and index separately, since for each state the total number of arcs of each type and index is known. To avoid overly cumbersome notation we choose not to do so here.
\end{remark}

% To round off this brief discussion of linkoids we describe their Bollob\'{a}s-Riordan polynomial and an associated Thistlethwaite theorem:

\begin{definition}
The \emph{arrow Bollob\'{a}s-Riordan polynomial} for decorated marked ribbon graphs with several marked vertices is defined by
\[
    R^m_G(a,\mathbf{b},c) = \sum_{F\subseteq E(G)} a^{k(F)} \left( \prod_{e\in F} b_e \right) c^{\text{bc}(F)} \left( \prod_{f\in\partial^c(F)} K_{i(f)/2}^{\ell(f)} \right) \left( \prod_{f\in\partial^a(F)} \Lambda^{\sigma(f)}_{i(f)/2} \right).
\]
Here $\partial^c(F)$ and $\partial^a(F)$ denote the set of circular boundary components and arc boundary components of $F$, respectively, and $i(f)$ denotes the number of arrows on a boundary component $f$. Moreover $\ell(f)$ denotes the minimum of the numbers of arc components in the interior and exterior of $f\in \partial^c(F)$. Finally $\sigma(f)$ is either an empty symbol or one of the symbols $\{\prime,h,t\}$, based on the endpoints of $f\in \partial^a(F)$.
\end{definition}

\begin{remark}
The arrow Bollob\'{a}s-Riordan polynomial for linkoids satisfies a deletion-contraction relation analogous to Proposition \ref{prop:del_cont}. However in the presence of multiple knotoidal components we have no canonical pointed product as we do for knotoids.
\end{remark}

Finally, we state the Thistlethwaite theorem for spherical linkoids, the proof of which is analogous to that of Theorem \ref{thm:thistlethwaite}:

\begin{theorem}
Let $L$ be a linkoid diagram and let $G_L^s$ be the decorated marked ribbon graph corresponding to a state $s$ of $K$. Then
\[
    \langle K\rangle^m_A = \frac{A^{e_+}B^{e_-}}{d} R^m_{G_K^s}(1,\mathbf{b},d).
\]
where $e_\pm$ and $\mathbf{b}$ are as in Theorem \ref{thm:thistlethwaite}.
\end{theorem}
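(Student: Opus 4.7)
The plan is to follow the proof of Theorem \ref{thm:thistlethwaite} essentially verbatim, adjusting only the bookkeeping needed to accommodate the enriched family of reduced state components appearing in the linkoid setting. First I would verify that the right-hand side is independent of the chosen state $s$. Any two states $s, s'$ of $L$ differ at some set $C$ of crossings, and the edge set $E(C) \subseteq E(G_L^s)$ realizes $G_L^{s'}$ as the partial dual of $G_L^s$ by the natural analogue of Proposition \ref{prop:state_duals}. The partial-duality identity of Proposition \ref{prop:BR_duals} extends to $R^m$ through the same symmetric-difference bijection $H \mapsto H \Delta F$ on spanning subgraphs, since that bijection preserves boundary components and therefore all factors $c$, $K_i^{\ell}$, and $\Lambda^{\sigma}_{i/2}$ appearing in $R^m$. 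The prefactor $A^{e_+}B^{e_-}$ then absorbs the change of edge signs exactly as in the single-knotoidal-component case.

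With state-independence established, I would fix a reference state $s$ and use the bijection $s' \mapsto F(s')$ between states of $L$ and spanning subgraphs of $G_L^s$, where $F(s')$ is the set of edges at which $s'$ differs from $s$. The goal is to show term-by-term equality of contributions. As in the proof of Theorem \ref{thm:thistlethwaite}, the state components of $s'$ are in canonical bijection with the boundary components of $F(s')$, so the factors of $d$ in the contributions automatically match, and the powers of $A$ and $B$ agree by the identities $e_+(G_L^s \setminus F(s')) + e_-(F(s')) = p_a(s')$ and $e_-(G_L^s \setminus F(s')) + e_+(F(s')) = p_b(s')$, combined with the edge weights $b_e \in \{B/A,\, A/B\}$.

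The main obstacle, and the only genuinely new content beyond Theorem \ref{thm:thistlethwaite}, is verifying that the refined decorations $\sigma(f) \in \{\emptyset, \prime, h, t\}$ and the loop index $\ell(f)$ are correctly detected by $R^m$ under the correspondence between state components and boundary components. For $\sigma(f)$, one traces the induced orientation along a boundary circle of $F(s')$: an arc component with two head-endpoints (respectively two tail-endpoints) arises precisely when the inherited orientation flips an odd number of times along the component, and this parity is transparently invariant under the boundary-to-state correspondence. For $\ell(f)$, one uses that the classical spherical setting forbids $V0$-moves, so that the interior/exterior partition of arc components around a loop is well-defined both on the diagram and as planar data of the embedded boundary circle of $F(s')$; thus the counts $n_i, n_e$, and hence $\min(n_i, n_e) = \ell(f)$, agree on both sides. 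Once these decorations are matched, summing the term-by-term equality over the bijection $s' \leftrightarrow F(s')$ yields the theorem. I expect the $\sigma(f)$ bookkeeping, and in particular confirming that head/tail endpoints are preserved by partial duality applied to arc-carrying boundary components, to be the main subtlety, since this is the distinctively new combinatorial datum arising in the multi-component setting.
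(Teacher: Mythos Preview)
Your proposal is correct and follows essentially the same approach as the paper, which simply states that the proof is analogous to that of Theorem \ref{thm:thistlethwaite}. Your elaboration of the bookkeeping for the new decorations $\sigma(f)$ and $\ell(f)$ is exactly the additional verification implicitly required by that analogy.
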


% \begin{proof}
% Analogous to the proof of Theorem \ref{thm:thistlethwaite}.
% \end{proof}

\noindent \textbf{Acknowledgements:} We would like to thank Iain Moffatt, Kerri Morgan, and Graham Farr for co-organizing the Workshop on Uniqueness and Discernment in Graph Polynomials at the MATRIX institute on 16-27 October 2023, where this work was initiated. We are very grateful for the support and productive research environment provided by MATRIX. Thanks also go to the participants for valuable discussions.

% \newpage 

% \bibliography{bibliography}

\end{document}